%
%
%


\documentclass{mcom-l}





\newtheorem{theorem}{Theorem}[section]
\newtheorem{lemma}[theorem]{Lemma}

\theoremstyle{definition}

\newtheorem{corollary}[theorem]{Corollary}
\theoremstyle{remark}

\numberwithin{equation}{section}
\allowdisplaybreaks[4]

\begin{document}

\title[Parabolic frequency on Ricci flow and Ricci-harmonic flow]{Parabolic frequency monotonicity on Ricci flow and Ricci-harmonic flow with bounded curvatures}


\author{Chuanhuan Li}
\address{School of Mathematics, Southeast University, Nanjing 211189, China}
\curraddr{}
\email{chli@seu.edu.cn}
\thanks{}

\author{Yi Li}
\address{School of Mathematics and Shing-Tung Yau Center of Southeast University, Southeast University, Nanjing 211189, China}
\curraddr{}
\email{yilicms@gmail.com, yilicms@seu.edu.cn}
\thanks{}

\author{Kairui Xu}
\address{School of Mathematics, Southeast University, Nanjing 211189, China}
\curraddr{}
\email{karry\_xu@seu.edu.cn}
\thanks{}


\date{}


\begin{abstract}
In this paper, we study the monotonicity of parabolic frequency motivated by \cite{frequency on RF} under the Ricci flow and the Ricci-harmonic flow on manifolds. Here we consider two cases: one is the  monotonicity of parabolic frequency for the solution of linear heat equation with bounded Bakry-\'{E}mery Ricci curvature, and another case is the  monotonicity of parabolic frequency for the solution of heat equation with bounded Ricci curvature.
\end{abstract}

\maketitle


\section{Introduction}

In 1979, the (elliptic) frequency functional for a harmonic function $u(x)$ on $\mathbb{R}^{n}$ was introduced by Almgren in \cite{Dirichlet problem by Almgren}, which is defined by
$$\displaystyle N(r)=\frac{r\int_{B(r,p)}|\nabla u(x)|^{2}dx}{\int_{\partial B(r,p)}u^{2}(x)dA},$$
where $dA$ is the induced $(n-1)$-dimensional Hausdorff measure on $\partial B(r,p)$ and $p$ is a fixed point in $\mathbb{R}^{n}$. Almgren obtained that $N(r)$ is monotone nondecreasing for $r$, and he used this property to investigate local regularity harmonic of functions and minimal surfaces. Next, Garofalo-Lin \cite{Mon prop by Garofalo,Unique continuation by Garofalo} considered the monotonicity of frequency functional on Riemannian manifold to study the unique continuation for elliptic operators. The frequency functional was also used to estimate the size of nodal sets in \cite{LA-2018,LA-2018 Yau}. For more applications, see \cite{Harmonic functions by Colding,H-H-L 1998,H-L 1994,frequency by Lin,Zelditch}.

The parabolic frequency for the solution of heat equation on $\mathbb{R}^{n}$ was introduced by Poon in \cite{Unique continuation by Poon}, and  Ni \cite{N-2015} considered the case when $u(t)$ is a holomorphic function, both of them showed that the parabolic frequency is nondecreasing. Besides, on Riemannian manifolds, the monotonicity of the parabolic frequency was obtained by Colding-Minicozzi \cite{frequency on manifold} through the drift Laplacian operator. Using the matrix Harnack's inequality in \cite{heat equation by Hamilton}, Li-Wang \cite{Parabolic frequency by Li-Wang} investigated the parabolic frequency on compact Riemannian manifolds and the 2-dimensional Ricci flow.
For higher dimension, the Ricci flow
\begin{align}
\partial_{t}g(t)=-2\text{\rm Ric}(g(t))
\label{RF}
\end{align}
is introduced by Hamilton in \cite{RF} to study the compact three-manifolds with positive Ricci curvature, which is a special case of the Poincar\'e conjecture finally proved by Perelman in \cite{Perelman 03,Perelman 02}. Hamilton \cite{RF} obtained the short time existence and uniqueness of the Ricci flow on compact manifolds, and  Shi \cite{Shi-89} obtained a short time solution of the Ricci flow on a complete noncompact manifold whose uniqueness with bounded Riemann curvature was proved by Chen-Zhu in \cite{C-Z 2006}.

In \cite{frequency on RF}, Julius-Dain  defined the following parabolic frequency for a solution $u(t)$ of the heat equation 
$$U(t)=-\frac{\tau\|\nabla_{g(t)}u(t)\|_{L^{2}(d\nu)}^{2}}{\|u(t)\|_{L^{2}(d\nu)}^{2}}e^{-\int\frac{1-\kappa}{\tau}}$$
where $\tau(t)$ is the backwards time, $\kappa(t)$ is the time-dependent function and $d\nu$ is the weighted measure. And they proved that parabolic frequency $U(t)$ for the solution of heat equation is monotone increasing along the Ricci
flow with the bounded Bakry-\'{E}mery Ricci curvature.


In this paper, besides considering the Ricci flow, we also consider the Ricci-harmonic flow
\begin{equation}
\left\{
\begin{aligned}
\partial_{t}g(t)&=-2\text{\rm Ric}(g(t))+2\alpha(t)d\phi(t)\otimes d\phi(t)\\
\partial_{t}\phi(t)&=\triangle_{g(t)}\phi(t)
\end{aligned}
\right.
\label{RHF}
\end{equation}
introduced in \cite{RHF by List,RHF by Reto}, which was motivated by Einstein vacuum equations in general relativity, where $g(t)$ is a family of Riemannian metrics, $\phi(t)$ is a family of smooth functions, and $\alpha(t)$ is a time-dependent positive function. M\"{u}ller \cite{RHF by Reto} got the short time existence and uniqueness of the Ricci-harmonic flow.  

The purpose of this paper is to study the monotonicity of parabolic frequency on Ricci flow $\eqref{RF}$ and the Ricci-harmonic flow $\eqref{RHF}$ on $[0,T)$ with $T\in(0,+\infty]$. Here, we consider the following two cases.

${}$

\textbf{A. Parabolic frequency for linear heat equation.}
We consider the following linear heat equation
\begin{equation}
(\partial_{t}-\triangle_{g(t)})u(t)=a(t)u(t),
\label{lpe}
\end{equation}
where $a(t)$ is a time-dependent smooth function, $u(t)$ is a family of smooth functions on $M$. Here, $g(t)$ is evolved by the Ricci flow or the Ricci-harmonic flow. 

The parabolic frequency $U(t)$ for the solution of linear heat equation $\eqref{lpe}$ on Ricci flow or  Ricci-harmonic flow is denoted by
\begin{equation}
U(t)=\exp\left\{-\int_{t_{0}}^{t}\frac{h'(s)+\kappa(s)}{h(s)} ds\right\}\frac{h(t)\int_{M}|\nabla_{g(t)}u(t)|_{g(t)}^{2}d V_{g(t)}}{\int_{M} u^{2}(t) d V_{g(t)}},
\end{equation}
where $h(t)$ and $\kappa(t)$ are  time-dependent functions, $[t_{0},t]\subset (0,T)$, and $dV_{g(t)}$ is defined in (2.4).

By calculating, on the Ricci flow, we obtain the monotonicity of the parabolic frequency for the solution of linear heat equation $\eqref{lpe}$.
\begin{theorem}
Suppose $(M^{n},g(t))_{t\in[0,T)}$ is the Ricci flow $\eqref{RF}$ with $\text{\rm Ric}_{f(t)}\leq \frac{\kappa(t)}{2h(t)}g(t)$, where ${\rm Ric}_{f(t)}$ is defined in  $\eqref{Baker}$.

$(i)$. If $h(t)$ is a negative time-dependent function, then the parabolic frequency $U(t)$ is monotone increasing along the Ricci flow.

$(ii)$. If $h(t)$ is a positive time-dependent function, then the parabolic frequency $U(t)$ is monotone decreasing along the Ricci flow.
\end{theorem}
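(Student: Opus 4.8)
The plan is to differentiate $\log U$ and reduce everything to a single sign-definite inequality. Since the prefactor $\exp\{-\int_{t_0}^t (h'+\kappa)/h\,ds\}$ is strictly positive and $\int_M u^2\,dV_{g(t)}>0$, the functional $U$ carries the sign of $h(t)$. Hence the two statements $(i)$ and $(ii)$ follow at once from the single inequality $U'/U\le 0$: when $h<0$ we have $U<0$, so $U'/U\le0$ forces $U'\ge0$ and $U$ increases, while when $h>0$ we have $U>0$, so $U'/U\le0$ forces $U'\le0$ and $U$ decreases. Writing $A(t)=\int_M |\nabla_{g(t)}u|_{g(t)}^2\,dV_{g(t)}$ and $B(t)=\int_M u^2\,dV_{g(t)}$, the exponential weight is tailored precisely so that $\frac{d}{dt}\log U=\frac{A'}{A}-\frac{B'}{B}-\frac{\kappa}{h}$ (the $\tfrac{h'}{h}$ coming from the explicit factor $h(t)$ cancels the $\tfrac{h'}{h}$ hidden in the prefactor). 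Everything therefore reduces to the estimate $\frac{A'}{A}-\frac{B'}{B}\le\frac{\kappa}{h}$.

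First I would compute $B'(t)$ and $A'(t)$. For $B'$, differentiate under the integral, insert the heat equation $u_t=\triangle_{g(t)}u+a(t)u$ from \eqref{lpe}, and integrate by parts against the weighted measure $dV_{g(t)}$ of (2.4), using the self-adjointness of the associated drift Laplacian; this expresses $B'$ through $A$, the coefficient $a(t)$, and the measure-evolution term $\partial_t(dV_{g(t)})$. For $A'$, the Ricci flow \eqref{RF} supplies the metric-variation term $\partial_t g^{ij}=2R^{ij}$, so that $\partial_t|\nabla u|^2=2\,\text{\rm Ric}(\nabla u,\nabla u)+2\langle\nabla u,\nabla u_t\rangle$; after integrating by parts and applying Bochner's formula, the Ricci term combines with the Hessian $\nabla^2 f$ coming from the weight to assemble exactly into $2\,\text{\rm Ric}_{f}(\nabla u,\nabla u)$. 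Since $a(t)$ depends on $t$ only, I expect its contributions to $A'/A$ and $B'/B$ to be identical and hence to cancel in the difference, which is why $a$ does not appear in the hypothesis.

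The resulting estimate then splits into two pieces. The curvature piece is controlled directly by the hypothesis $\text{\rm Ric}_{f}\le\frac{\kappa}{2h}g$: integrating $\text{\rm Ric}_{f}(\nabla u,\nabla u)\le\frac{\kappa}{2h}|\nabla u|^2$ bounds its contribution to $A'/A$ by $\frac{\kappa}{h}$. The remaining piece is the parabolic analogue of Almgren's variance term; it is $\le 0$ by the Cauchy--Schwarz inequality $A^2=\left(\int_M u(-\triangle_f u)\,dV_{g(t)}\right)^2\le B\int_M(\triangle_f u)^2\,dV_{g(t)}$, exactly as in Poon's classical argument. Combining the two gives $\frac{A'}{A}-\frac{B'}{B}\le\frac{\kappa}{h}$, hence $U'/U\le0$.

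The main obstacle is the bookkeeping for the weighted measure and the weight function under the Ricci flow: I must track $\partial_t(dV_{g(t)})$ together with $\partial_t f$ so that the curvature correction collapses precisely to $\text{\rm Ric}_{f}$, rather than to $\text{\rm Ric}$ plus uncontrolled first-order terms, and I must check that these flow-induced corrections do not destroy the sign of the Cauchy--Schwarz term. Matching the exact coefficient $\frac{\kappa}{2h}$ appearing in the hypothesis to the $\frac{\kappa}{h}$ in the exponential weight is the delicate point; once the cancellations are verified, the rest is the routine Poon--Colding--Minicozzi differentiation.
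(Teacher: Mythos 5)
Your proposal is correct and takes essentially the same route as the paper: the conjugate-heat-kernel identity $\frac{d}{dt}\int_M w\,dV=\int_M(\partial_t-\triangle)w\,dV$, the evolution of $|\nabla u|^2$ under Ricci flow, the weighted Bochner identity $\int_M|\nabla^2u|^2\,dV=\int_M\bigl(|\triangle_f u|^2-\mathrm{Ric}_f(\nabla u,\nabla u)\bigr)dV$, the bound $\mathrm{Ric}_f\le\frac{\kappa}{2h}g$, and Cauchy--Schwarz via $\int_M|\nabla u|^2\,dV=-\int_M u\,\triangle_f u\,dV$ are exactly the paper's steps (3.6)--(3.12), with your $\log U$ bookkeeping and uniform treatment of both signs of $h$ replacing the paper's equivalent normalizations $\hat I,\hat D$ and case split. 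The only point to tidy is that $U'/U$ presupposes $U\neq 0$ (i.e.\ $\nabla u\not\equiv 0$); clearing denominators as the paper does, via $\hat I^2U'=\hat D'\hat I-\hat I'\hat D\ge 0$, avoids this degenerate case.
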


Similarly, for the Ricci-harmonic flow, we have the following:
\begin{theorem}
Suppose $(M^{n},g(t),\phi(t))_{t\in[0,T)}$ is the Ricci-harmonic flow $\eqref{RHF}$ with $\text{\rm Ric}_{f(t)}-\alpha(t)\langle\nabla_{g(t)}\phi(t),\cdot\rangle_{g(t)}^{2}\leq \frac{\kappa(t)}{2h(t)}g(t)$.

(i). If $h(t)$ is a negative time-dependent function, then the parabolic frequency $U(t)$ is monotone increasing along the Ricci-harmonic flow.

(ii). If $h(t)$ is a positive time-dependent function, then the parabolic frequency $U(t)$ is monotone decreasing along the Ricci-harmonic flow.
\end{theorem}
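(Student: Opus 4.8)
The plan is to differentiate $U(t)$ directly rather than $\log U(t)$, since $U$ changes sign with $h$ and it is precisely the sign of $U$ itself that produces the two cases. Writing $I(t):=\int_{M}u^{2}(t)\,dV_{g(t)}$ and $J(t):=\int_{M}|\nabla_{g(t)}u(t)|^{2}_{g(t)}\,dV_{g(t)}$, so that $U=E(t)\,hJ/I$ with $E(t):=\exp\{-\int_{t_{0}}^{t}(h'+\kappa)/h\}>0$, a direct computation gives
\[
U'(t)=U(t)\,Q(t),\qquad Q(t):=\frac{J'(t)}{J(t)}-\frac{I'(t)}{I(t)}-\frac{\kappa(t)}{h(t)},
\]
because the factor $h'/h$ from differentiating $\log h$ cancels against the corresponding piece of $E'/E$. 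Thus the whole theorem reduces to the single assertion $Q(t)\le 0$: once this is known, $U'=UQ$ is $\le 0$ when $h>0$ (so the nonnegative $U$ is decreasing) and $\ge 0$ when $h<0$ (so the nonpositive $U$ is increasing), which are exactly (ii) and (i).

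First I would compute $I'$ and $J'$ using the Ricci-harmonic evolution equations, the heat equation $\eqref{lpe}$, and integration by parts against $dV_{g(t)}$. For $I'$ the factor $u^{2}$ contributes $2\int u\,u_{t}\,dV_{g(t)}$ and the measure contributes its time derivative; after integrating by parts and using the defining relation for the weight $f$ from $\eqref{Baker}$ and (2.4), I expect the clean form $I'=-2J+2a(t)I$. For $J'$ the essential new input is
\[
\partial_{t}|\nabla u|^{2}_{g(t)}=(\partial_{t}g^{ij})\,\partial_{i}u\,\partial_{j}u+2\langle\nabla u,\nabla u_{t}\rangle=2\,\text{\rm Ric}(\nabla u,\nabla u)-2\alpha\langle\nabla\phi,\nabla u\rangle^{2}+2\langle\nabla u,\nabla u_{t}\rangle,
\]
since $\partial_{t}g^{ij}=2\,\text{\rm Ric}^{ij}-2\alpha\,\nabla^{i}\phi\,\nabla^{j}\phi$ under $\eqref{RHF}$. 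Integrating $2\langle\nabla u,\nabla u_{t}\rangle$ by parts and inserting the heat equation turns it into $-2\int_{M}(\triangle_{f}u)^{2}\,dV_{g(t)}+2a(t)J$, where $\triangle_{f}:=\triangle_{g}-\langle\nabla_{g}f,\nabla_{g}\,\cdot\,\rangle_{g}$ is the drift Laplacian for which $dV_{g(t)}$ is symmetric. Writing $\text{\rm Ric}=\text{\rm Ric}_{f}-\nabla^{2}f$, the curvature contribution should reorganize into $2\int_{M}(\text{\rm Ric}_{f}-\alpha\langle\nabla\phi,\cdot\rangle^{2})(\nabla u,\nabla u)\,dV_{g(t)}$, precisely the tensor in the hypothesis. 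Compared with the Ricci-flow computation behind Theorem 1.1, the only genuinely new terms are the $-2\alpha\langle\nabla\phi,\nabla u\rangle^{2}$ above and the $\alpha|\nabla\phi|^{2}$ produced in the measure evolution; together they are exactly what upgrade $\text{\rm Ric}_{f}$ to $\text{\rm Ric}_{f}-\alpha\langle\nabla\phi,\cdot\rangle^{2}$, and the harmonic evolution $\partial_{t}\phi=\triangle_{g}\phi$ is used to keep them consistent.

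With these two derivatives in hand, I would assemble $Q$. The time-only factor $a(t)$ cancels between $J'/J$ and $I'/I$, leaving
\[
Q=2\left(\frac{J}{I}-\frac{\int_{M}(\triangle_{f}u)^{2}\,dV_{g(t)}}{J}\right)+\left(\frac{2}{J}\int_{M}(\text{\rm Ric}_{f}-\alpha\langle\nabla\phi,\cdot\rangle^{2})(\nabla u,\nabla u)\,dV_{g(t)}-\frac{\kappa}{h}\right).
\]
The first bracket is $\le 0$ by Cauchy–Schwarz: since $J=-\int_{M}u\,\triangle_{f}u\,dV_{g(t)}$, one has $J^{2}\le I\int_{M}(\triangle_{f}u)^{2}\,dV_{g(t)}$, hence $J/I\le\int_{M}(\triangle_{f}u)^{2}\,dV_{g(t)}/J$ using $J>0$. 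The second bracket is $\le 0$ by the curvature hypothesis: $\text{\rm Ric}_{f}-\alpha\langle\nabla\phi,\cdot\rangle^{2}\le\frac{\kappa}{2h}g$ gives, after integrating the quadratic form against the positive measure, $\int_{M}(\text{\rm Ric}_{f}-\alpha\langle\nabla\phi,\cdot\rangle^{2})(\nabla u,\nabla u)\,dV_{g(t)}\le\frac{\kappa}{2h}J$, so the bracket is at most $\frac{\kappa}{h}-\frac{\kappa}{h}=0$. Therefore $Q\le 0$, and the dichotomy follows from $U'=UQ$ as explained above.

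The hard part will be the $J'$ computation together with verifying that every term other than $(\text{\rm Ric}_{f}-\alpha\langle\nabla\phi,\cdot\rangle^{2})(\nabla u,\nabla u)$ genuinely cancels. In particular, rewriting $\text{\rm Ric}$ as $\text{\rm Ric}_{f}-\nabla^{2}f$ leaves a stray Hessian term $-2\int_{M}\nabla^{2}f(\nabla u,\nabla u)\,dV_{g(t)}$, and likewise the scalar-curvature and $\alpha|\nabla\phi|^{2}$ contributions coming from $\partial_{t}\,dV_{g(t)}$ must disappear; all of this has to be absorbed using the precise evolution of the weight $f$ and the measure fixed in (2.4) and $\eqref{Baker}$. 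Getting this bookkeeping exactly right — so that the clean form $I'=-2J+2aI$ and the reorganized $J'$ hold — is the only substantial obstacle, after which the sign analysis is immediate from Cauchy–Schwarz and the curvature bound.
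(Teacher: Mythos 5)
Your argument is essentially the paper's own proof of this theorem (Theorem 3.7), repackaged as a logarithmic derivative: your identity $U'=UQ$ with $Q\le 0$ is, after multiplying through by $IJ$, exactly the paper's inequality $I^{2}(t)U'(t)\ge 0$ (resp.\ $\le 0$), obtained from the same three ingredients --- $I'=-2J+2aI$, the reorganized $J'$, and Cauchy--Schwarz applied via $J=-\int_{M}u\,\triangle_{f}u\,dV$. The bookkeeping you defer as ``the only substantial obstacle'' is precisely the paper's Lemma 3.2, the integrated weighted Bochner identity $\int_{M}|\nabla^{2}u|^{2}\,dV=\int_{M}\left(|\triangle_{f}u|^{2}-\text{\rm Ric}_{f}(\nabla u,\nabla u)\right)dV$ (quoted from Lemma 1.13 of Julius--Dain), combined with the measure evolution $\eqref{volume}$; these do absorb the stray Hessian, scalar-curvature and $\alpha|\nabla\phi|^{2}$ terms, and your stated formula $J'=2aJ-2\int_{M}(\triangle_{f}u)^{2}dV+2\int_{M}\left(\text{\rm Ric}_{f}-\alpha\langle\nabla\phi,\cdot\rangle^{2}\right)(\nabla u,\nabla u)\,dV$ agrees with the paper's (3.18) upon dividing by $h$. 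Be aware, though, that your intermediate claim that integrating $2\langle\nabla u,\nabla u_{t}\rangle$ by parts ``turns it into'' $-2\int_{M}(\triangle_{f}u)^{2}dV+2aJ$ is not literally correct in isolation: since $u_{t}=\triangle u+au=\triangle_{f}u+\langle\nabla f,\nabla u\rangle+au$, a cross term $-2\int_{M}\langle\nabla f,\nabla u\rangle\,\triangle_{f}u\,dV$ survives and only cancels in the total against the Hessian-of-$f$ piece --- this is exactly what Lemma 3.2 encodes. One further small caveat: $U'=UQ$ presupposes $J>0$, whereas the paper never divides by $J$ (it works with $I^{2}U'=\hat{D}'\hat{I}-\hat{I}'\hat{D}$ directly). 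This is easily patched: if $J(t^{*})=0$ then $u(\cdot,t^{*})$ is spatially constant, so $\triangle_{f}u=0$ and the curvature integrand vanishes, whence $J'(t^{*})=0$ and $U'(t^{*})=0$ by the product rule, so the monotonicity conclusion is unaffected; adding this one observation closes your argument.
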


${}$

\textbf{B. Parabolic frequency for heat equation.}
As also in Li-Wang \cite{Parabolic frequency by Li-Wang}, we consider the following heat equation:
\begin{equation}
\partial_{ t} u(t)=\triangle_{g(t)}u(t),
\label{heat equation}
\end{equation}
where $u(t)$ is a family of smooth functions on $M$.

We now investigate the monotonicity of the parabolic frequency  for the solutions of heat equation $\eqref{heat equation}$  on the Ricci flow $\eqref{RF}$ 
or the Ricci-harmonic flow $\eqref{RHF}$ 
with bounded ${\rm Ric}(g(t))$ instead of ${\rm Ric}_{f(t)}$. Note that, here we assume $M$ is closed.

The parabolic frequency $U(t)$ on $[t_{0},t_{1}]\subset (0,T)$ is denoted by
$$U(t)=\exp\left\{-\int_{t_{0}}^{t}\left(\frac{h'(s)}{h(s)}+2Kn+\frac{C(s)}{2}n+\frac{n}{s}\right)ds\right\}\frac{h(t)\int_{M}|\nabla_{g(t)}u(t)|_{g(t)}^{2}d V_{g(t)}}{\int_{M} u^{2}(t) d V_{g(t)}}$$
where $h(t)$ is a time-dependent function, $K$ is a positive constant, $n$ is the dimension of $M$, $C(t)=N/t$, $N=\log\frac{A}{\eta},\ \eta=\min_{M}u(0)$ and $A=\max_{ M}u(0)$. Observe that, $A$ and $\eta$ are both positive constants.

Then using Li-Yau Harnack estimate  and Hamilton's estimate on Ricci flow $\eqref{RF}$, we have the following:

\begin{theorem}
If $M^{n}$ is a closed Riemannian manifold, $(M,g(t))_{t\in[0,T)}$ is the solution of the Ricci flow $\eqref{RF}$ with bounded Ricci curvature, $0\leq\text{\rm Ric}(g(t))\leq Kg(t)$, where $K$ is a positive constant, and $u(t)$ is a positive solution of heat equation $\eqref{heat equation}$ with $u(\cdot,0)\leq A$, then the following holds.

$(i)$. If $h(t)$ is a negative time-dependent function, the parabolic frequency $U(t)$ is monotone increasing along the Ricci flow.

$(ii)$. If $h(t)$ is a positive time-dependent function, the parabolic frequency $U(t)$ is monotone decreasing along the Ricci flow.
\end{theorem}

With the same discussion of the Ricci-harmonic flow $\eqref{RHF}$, we have
\begin{theorem}
Suppose $M^{n}$ is a closed Riemannian manifold, $(M,g(t),\phi(t))_{t\in[0,T)}$ is the solution of the Ricci-harmonic flow $\eqref{RHF}$ with bounded Ricci curvature, $0\leq\text{\rm Ric}(g(t))\leq Kg(t)$, $K$ is a positive constant, and $u(t)$ is a positive solution of heat equation $\eqref{heat equation}$ with $u(\cdot,0)\leq A$. Moreover, if we assume that $\alpha(t)$ is a non-increasing function, bounded from below by $\bar{\alpha}$, and $0\leq d\phi(t)\otimes d\phi(t)\leq\frac{C}{t}g(t)$, where C is a constant depending on n and $\bar{\alpha}$, then the following holds.

$(i)$. If $h(t)$ is a negative time-dependent function, then the parabolic frequency $U(t)$ is monotone increasing along the Ricci-harmonic flow.

$(ii)$. If $h(t)$ is a positive time-dependent function, then the parabolic frequency $U(t)$ is monotone decreasing along the Ricci-harmonic flow.
\end{theorem}

\section{Notations and definitions }

In this section, we introduce some notations and definitions which will be used in the sequel. We use the notations in Hamilton’s paper \cite{RF} , $\nabla_{g}$ is the Levi-Civita connection induced by $g$, $\text{\rm Ric}$, R, $d\mu_{g}$ are  Ricci curvature, scalar curvature and volume form, respectively. The Laplacian of smooth time-dependent function $f(t)$ with respect to a family of Riemannian metrics $g(t)$ is
$$\triangle_{g(t)}f(t)=g^{ij}(t)[\partial_{i}\partial_{j}f(t)-\Gamma_{ij}^{k}(t)\partial_{k}f(t)]$$
where $\Gamma_{ij}^{k}(t)$ is Christoffel symbols of $g(t)$ and $\partial_{i}=\frac{\partial}{\partial x^{i}}$.


Under the Ricci flow $\eqref{RF}$, let  $\tau(t)=T-t$ be the backwards time. For any time-dependent function $f(t)\in C^{\infty}(M)$, we denote 
$$K(t)=(4\pi\tau(t))^{-\frac{n}{2}}e^{-f(t)}$$
to be the positive solution of the conjugate heat equation
\begin{equation}
\partial_{t}K(t)=-\triangle_{g(t)} K(t)+R(g(t)) K(t).
\end{equation}
From the definition of $K(t)$, we can prove the smooth function $f(t)$ satisfies the following equation
\begin{equation}
\partial_{t}f(t)=-\triangle_{g(t)} f(t)-R(g(t))+|\nabla_{g(t)} f(t)|_{g(t)}^{2}+\frac{n}{2\tau(t)} .
\end{equation}

With the same discussion, the backward heat-type equation with the conjugate heat equation under the Ricci-harmonic flow $\eqref{RHF}$ is given by
\begin{align}
\partial_{t}f(t)=-\triangle_{g(t)}f(t)-R(g(t)) +|\nabla_{g(t)}f(t)|_{g(t)}^{2}+\frac{n}{2\tau(t)}+\alpha(t)|\nabla_{g(t)}\phi(t)|_{g(t)}^{2}.
\end{align}
For (2.3), we can also define $K(t)$ as
$$K(t)=(4\pi\tau(t))^{-\frac{n}{2}}e^{-f(t)}$$
which satisfies the following conjugate heat equation
\begin{equation}
\partial_{t}K(t)=-\triangle_{g(t)} K(t)+R(g(t)) K(t)-\alpha(t)|\nabla_{g(t)}\phi(t)|_{g(t)}^{2}K(t).
\end{equation}
Then, under Ricci flow $\eqref{RF}$ or Ricci-harmonic flow $\eqref{RHF}$, we can define the weighted measure 
\begin{equation}
dV_{g(t)}:= K(t) d\mu_{g(t)}=(4\pi\tau(t))^{-\frac{n}{2}}e^{-f(t)}d\mu_{g(t)},\ \ \int_{M} dV_{g(t)}=1.
\end{equation}

On the weighted Riemannian manifold $(M^{n},g(t),dV_{g(t)})$, the weighted Bochner formula for $u$ is as follow 
\begin{align}
\label{Bochner}
\triangle_{g(t),f(t)}(|\nabla_{g(t)} u|_{g(t)}^{2})=&2|\nabla_{g(t)}^{2} u|_{g(t)}^{2}+2\langle\nabla_{g(t)} u,\nabla_{g(t)}\triangle_{g(t),f(t)}u\rangle_{g(t)}\\
&+2\text{\rm Ric}_{f(t)}(\nabla_{g(t)} u,\nabla_{g(t)} u)\notag
\end{align}
where
\begin{align}
\text{\rm Ric}_{f(t)}:=\text{\rm Ric}(g(t))+\nabla_{g(t)}^{2} f(t)
\label{Baker}
\end{align}
is the Bakry-\'{E}mery Ricci tensor introduced in \cite{B-E Ricci}, and
\begin{equation}
\triangle_{g(t),f(t)}u:=e^{f(t)}\text{div}_{g(t)}(e^{-f(t)}\nabla_{g(t)}{u})=\triangle_{g(t)} u-\langle\nabla_{g(t)} f(t),\nabla_{g(t)} u\rangle_{g(t)}
\end{equation}
is the drift Laplacian operator for any smooth function $u$.

Under the Ricci flow $\eqref{RF}$, the volume form $d\mu_{g(t)}$ satisfies
$$\partial_{t}(d\mu_{g(t)})=-R(g(t))d\mu_{g(t)}$$
while under the Ricci-harmonic flow $\eqref{RHF}$, the volume form $d\mu_{g}$ satisfies
$$\partial_{t}(d\mu_{g(t)})=\left(-R(g(t))+\alpha(t)|\nabla_{g(t)}\phi(t)|^{2}_{g(t)}\right)d\mu_{g(t)}.$$
Thus, the conjugate heat kernel measure $dV_{g(t)}$ are both evolved by
\begin{equation}
\partial_{t}(dV_{g(t)})=-(\triangle_{g(t)} K(t))d\mu_{g(t)}=-\frac{\triangle_{g(t)} K(t)}{K(t)}dV_{g(t)}.
\label{volume}
\end{equation}

For convenient, we use $\triangle_{f},\ \text{\rm Ric},\ \nabla,\ |\cdot|,\ dV$ to replace $\triangle_{g(t),f(t)},\ \text{\rm Ric}(g(t)),$ $\nabla_{g(t)},\ |\cdot|_{g(t)},\ dV_{g(t)}$. We always omit
the time variable t.

\section{parabolic frequency for the linear heat equation}

In this section, we consider parabolic frequency $U(t)$ for the solution of the linear heat equation $\eqref{lpe}$ under the Ricci flow $\eqref{RF}$ and the Ricci-harmonic flow $\eqref{RHF}$, respectively.

For a time-dependent function $u=u(t):M\times[t_{0},t_{1}]\rightarrow \mathbb{R}$ with $u(t), \partial_{t}u(t)\in W^{2,2}_{0}(dV_{g(t)})$ for all $t\in[t_{0},t_{1}]\subset(0,T)$, we denote by
\begin{align}
I(t)&=\int_{M} u^{2}(t) d V_{g(t)},\\
D(t)&=h(t)\int_{M}|\nabla_{g(t)}u(t)|_{g(t)}^{2}d V_{g(t)}\\
&=-h(t)\int_{M}\langle u(t),\triangle_{g(t),f(t)}u(t)\rangle_{g(t)} dV_{g(t)},\notag\\
U(t)&=\exp\left\{-\int_{t_{0}}^{t}\frac{h'(s)+\kappa(s)}{h(s)} ds\right\}\frac{D(t)}{I(t)},
\end{align}
where $h(t)$ and $\kappa(t)$ are both time-dependent smooth functions.

${}$

\subsection{Parabolic frequency for the linear heat equation under Ricci flow }
At first, we study the parabolic frequency $U(t)$ under the Ricci flow.
Before doing it, we give some lemmas.
\begin{lemma}
For all $u,v\in W^{1,2}_{0}(dV_{g(t)})$, the drift Laplacian operator $\triangle_{g(t),f(t)}$ satisfies:
$$\int_{M}(\triangle_{g(t),f(t)}u)vdV_{g(t)}=\int_{M}u(\triangle_{g(t),f(t)}v)dV_{g(t)}$$
\end{lemma}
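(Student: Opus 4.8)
The plan is to collapse both sides to a single symmetric bilinear form by integration by parts, using the key structural fact that the exponential weight built into the drift Laplacian is precisely the one appearing in the weighted measure. The starting point is the definition of the drift operator, $\triangle_{f}u=e^{f}\mathrm{div}(e^{-f}\nabla u)$, together with the definition of the conjugate heat kernel measure $dV=(4\pi\tau)^{-\frac{n}{2}}e^{-f}d\mu$.

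First I would substitute both of these into the left-hand side. The prefactor $(4\pi\tau)^{-\frac{n}{2}}$ is constant on $M$, so it pulls out of the integral, and crucially the factor $e^{f}$ produced by $\triangle_{f}u$ cancels against the factor $e^{-f}$ carried by $dV$. This leaves
\[
\int_{M}(\triangle_{f}u)\,v\,dV=(4\pi\tau)^{-\frac{n}{2}}\int_{M}\mathrm{div}(e^{-f}\nabla u)\,v\,d\mu.
\]
Applying the divergence theorem to move one derivative onto $v$, the boundary term vanishes (since $M$ is closed in the relevant setting, or, for general $u,v\in W^{1,2}_{0}(dV)$, after approximation by smooth compactly supported functions), and I obtain
\[
\int_{M}(\triangle_{f}u)\,v\,dV=-(4\pi\tau)^{-\frac{n}{2}}\int_{M}e^{-f}\langle\nabla u,\nabla v\rangle\,d\mu=-\int_{M}\langle\nabla u,\nabla v\rangle\,dV.
\]
The right-hand expression is manifestly symmetric in $u$ and $v$. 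Carrying out the identical computation with the roles of $u$ and $v$ exchanged yields the same quantity for $\int_{M}u\,(\triangle_{f}v)\,dV$, and comparing the two identities gives the claimed self-adjointness.

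I expect no genuinely hard analysis here; the only point requiring care is the justification of the integration by parts for Sobolev-class functions rather than smooth ones, i.e.\ confirming that the boundary contribution is absent. Since the paper works over a closed manifold and uses the space $W^{1,2}_{0}(dV)$, this follows either immediately from closedness or from density of $C_{c}^{\infty}$ in $W^{1,2}_{0}(dV)$ together with the continuity of the symmetric form $(u,v)\mapsto-\int_{M}\langle\nabla u,\nabla v\rangle\,dV$ in the $W^{1,2}$ norm.
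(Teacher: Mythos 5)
Your proposal is correct and takes essentially the same route as the paper: both reduce each side, via the cancellation of $e^{f}$ from $\triangle_{f}$ against $e^{-f}$ in $dV$ and one integration by parts, to the symmetric Dirichlet form $-\int_{M}\langle\nabla u,\nabla v\rangle\,dV$. In fact your version is slightly more careful — you justify the absence of boundary terms for $W^{1,2}_{0}$ functions, and your prefactor $(4\pi\tau)^{-\frac{n}{2}}$ is the correct one (the paper's displayed $(4\pi\tau)^{\frac{n}{2}}$ is a sign-of-exponent typo).
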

\begin{proof}
By a straight computation, we obtain
\begin{align}
\int_{M}(\triangle_{g(t),f(t)}u)vdV_{g(t)}&=-(4\pi\tau(t))^{\frac{n}{2}}\int_{M}e^{-f(t)}\langle\nabla_{g(t)} u,\nabla_{g(t)} v\rangle_{g(t)} d\mu_{g(t)}\\
&=\int_{M}u(\triangle_{g(t),f(t)}v)dV_{g(t)}\notag
\end{align}
Thus, we get the desired result.
\end{proof}

\begin{lemma}
For any $u\in W^{2,2}_{0}(dV_{g(t)})$ , we have
\begin{align}
\int_{M}|\nabla_{g(t)}^{2} u|^{2}_{g(t)}d V_{g(t)}=\int_{M}\left(|\triangle_{g(t),f(t)}u|_{g(t)}^{2}-\text{\rm Ric}_{f(t)}(\nabla_{g(t)}u,\nabla_{g(t)}u)\right)d V_{g(t)}\notag
\end{align}
\end{lemma}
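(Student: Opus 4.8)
The plan is to derive the identity by integrating the weighted Bochner formula \eqref{Bochner} against the weighted measure $dV$ and then integrating by parts using the self-adjointness of the drift Laplacian established in the previous lemma. First I would recall \eqref{Bochner},
$$\triangle_{f}(|\nabla u|^{2})=2|\nabla^{2} u|^{2}+2\langle\nabla u,\nabla\triangle_{f}u\rangle+2\,\text{\rm Ric}_{f}(\nabla u,\nabla u),$$
and integrate both sides over $M$ with respect to $dV$.

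The first observation is that the left-hand side integrates to zero. Indeed, taking $v\equiv 1$ in the previous lemma gives $\int_{M}\triangle_{f}(|\nabla u|^{2})\,dV=\int_{M}|\nabla u|^{2}\,\triangle_{f}(1)\,dV=0$, since $\triangle_{f}(1)=0$; equivalently, the intermediate equality displayed in the proof of that lemma shows $\int_{M}\langle\nabla w,\nabla 1\rangle\,dV=-\int_{M}(\triangle_{f}w)\,dV$, which vanishes for $w=|\nabla u|^{2}$. Hence integrating the Bochner formula yields
$$0=2\int_{M}|\nabla^{2}u|^{2}\,dV+2\int_{M}\langle\nabla u,\nabla\triangle_{f}u\rangle\,dV+2\int_{M}\text{\rm Ric}_{f}(\nabla u,\nabla u)\,dV.$$
The second step is to rewrite the cross term. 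The weighted integration-by-parts identity $\int_{M}\langle\nabla\varphi,\nabla\psi\rangle\,dV=-\int_{M}(\triangle_{f}\varphi)\psi\,dV$ — which is exactly the computation in the proof of the previous lemma — applied with $\varphi=u$ and $\psi=\triangle_{f}u$ gives $\int_{M}\langle\nabla u,\nabla\triangle_{f}u\rangle\,dV=-\int_{M}|\triangle_{f}u|^{2}\,dV$. Substituting this and dividing by $2$ produces the claimed identity after rearrangement.

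The only genuine subtlety, and the step I would be most careful about, is justifying that $\int_{M}\triangle_{f}(|\nabla u|^{2})\,dV=0$ and that the integration by parts in the cross term carries no contribution "at infinity." Both facts rest on the self-adjointness and divergence identities of the previous lemma, whose validity requires the functions involved to lie in the appropriate weighted Sobolev space; this is precisely why the hypothesis $u\in W^{2,2}_{0}(dV)$ is imposed, so that $|\nabla u|^{2}$ and $\triangle_{f}u$ have enough regularity and decay against the Gaussian-type weight $e^{-f}$ in $dV$ for these identities to hold without boundary terms. Everything else is purely algebraic rearrangement.
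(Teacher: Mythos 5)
Your proof is correct, and it is essentially the canonical argument: the paper itself gives no proof here, simply citing Lemma 1.13 of \cite{frequency on RF}, and that cited lemma is proved exactly as you do --- integrate the weighted Bochner formula \eqref{Bochner} against $dV$, kill the left-hand side via $\int_{M}\triangle_{f}w\,dV=0$, and convert the cross term to $-\int_{M}|\triangle_{f}u|^{2}\,dV$ by the weighted integration by parts underlying Lemma 3.1. Your closing remark about justifying the absence of boundary terms via $u\in W^{2,2}_{0}(dV)$ (with a density argument, since the cross term formally involves third derivatives of $u$) is exactly the right point of care, so nothing is missing.
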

\begin{proof}
This result has been proved in Lemma 1.13 of \cite{frequency on RF}.
\end{proof}

\begin{lemma}
Suppose $u(t)$ is a family of smooth functions. Under the Ricci flow $\eqref{RF}$, the norm of $|\nabla_{g(t)} u(t)|_{g(t)}^{2}$ satisfies the evolution equation:
$$(\partial_{t}-\triangle_{g(t)})|\nabla_{g(t)} u(t)|_{g(t)}^{2}=-2|\nabla_{g(t)}^{2}u(t)|_{g(t)}^{2}+2\langle\nabla_{g(t)} u(t),\nabla_{g(t)}(\partial_{t}-\triangle_{g(t)}) u(t)\rangle_{g(t)}$$
\end{lemma}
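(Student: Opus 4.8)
The plan is to compute $\partial_{t}|\nabla u|^{2}$ and $\triangle|\nabla u|^{2}$ separately with respect to the time-dependent metric $g(t)$, and then to subtract them, exploiting a cancellation of Ricci terms that is special to the Ricci flow.

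First I would differentiate $|\nabla u|^{2}=g^{ij}\,\partial_{i}u\,\partial_{j}u$ in $t$. Since $\partial_{t}g_{ij}=-2R_{ij}$ by \eqref{RF}, the inverse metric evolves as $\partial_{t}g^{ij}=-g^{ia}g^{jb}\,\partial_{t}g_{ab}=2R^{ij}$, so that the derivative falling on the inverse metric yields $2\,\mathrm{Ric}(\nabla u,\nabla u)$, while the two derivatives falling on $u$ yield $2\langle\nabla u,\nabla\partial_{t}u\rangle$ (for a scalar function the spatial gradient of $\partial_{t}u$ is unambiguous, so no Christoffel terms appear). Hence
\[
\partial_{t}|\nabla u|^{2}=2\,\mathrm{Ric}(\nabla u,\nabla u)+2\langle\nabla u,\nabla\partial_{t}u\rangle .
\]

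Next I would apply the classical Bochner formula, which is the $f\equiv 0$ specialization of the weighted identity \eqref{Bochner} (there $\triangle_{f}=\triangle$ and $\mathrm{Ric}_{f}=\mathrm{Ric}$):
\[
\triangle|\nabla u|^{2}=2|\nabla^{2}u|^{2}+2\langle\nabla u,\nabla\triangle u\rangle+2\,\mathrm{Ric}(\nabla u,\nabla u).
\]
Subtracting, the two copies of $2\,\mathrm{Ric}(\nabla u,\nabla u)$ cancel exactly, and the linearity of $\nabla$ combines the inner-product terms into $2\langle\nabla u,\nabla(\partial_{t}-\triangle)u\rangle$, giving the asserted identity.

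The computation itself is routine; the point worth stressing is the cancellation of the curvature terms. It is precisely the factor $-2$ in $\partial_{t}g_{ij}=-2R_{ij}$ that makes the $+2\,\mathrm{Ric}(\nabla u,\nabla u)$ coming from the evolution of $g^{ij}$ annihilate the curvature term in the Bochner formula; under a general metric flow a residual Ricci term would survive, so this clean form is a feature of the Ricci flow. For rigor I would also note that the symmetry of $\mathrm{Ric}$ makes $R^{ij}\partial_{i}u\,\partial_{j}u$ unambiguous, and that all norms and inner products are taken with respect to $g(t)$, carrying the implicit time dependence.
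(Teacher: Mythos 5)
Your proposal is correct and follows essentially the same route as the paper: first compute $\partial_{t}|\nabla u|^{2}=2\,\mathrm{Ric}(\nabla u,\nabla u)+2\langle\nabla u,\nabla\partial_{t}u\rangle$ via the evolution $\partial_{t}g^{ij}=2R^{ij}$, then subtract the classical Bochner formula so that the Ricci terms cancel. You merely spell out details the paper leaves implicit (the inverse-metric evolution and the exactness of the cancellation due to the factor $-2$ in the Ricci flow), which is a sound elaboration rather than a different argument.
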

\begin{proof}
By calculating straightly, we have
\begin{align}
\partial_{t}|\nabla u|^{2}=2\text{\rm Ric}(\nabla u,\nabla u)+2\langle\nabla u,\nabla\partial_{t}u\rangle.
\end{align}
Together with the Bochner formula, yields
\begin{align}
(\partial_{t}-\triangle)|\nabla u|^{2}=
2\langle\nabla u,\nabla(\partial_{t}-\triangle) u\rangle-2|\nabla^{2}u|^{2}.
\end{align}
Then we get the desired result.
\end{proof}

\begin{theorem}
Suppose $(M^{n},g(t))_{t\in[0,T)}$ is the Ricci flow $\eqref{RF}$ with $\text{\rm Ric}_{f(t)}\leq \frac{\kappa(t)}{2h(t)}g(t)$.

$(i)$. If $h(t)$ is a negative time-dependent function, then the parabolic frequency $U(t)$ is monotone increasing along the Ricci flow.

$(ii)$. If $h(t)$ is a positive time-dependent function, then the parabolic frequency $U(t)$ is monotone decreasing along the Ricci flow.
\end{theorem}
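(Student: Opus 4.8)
The plan is to compute the logarithmic derivative $\frac{U'(t)}{U(t)}$ and show it has a definite sign controlled by the sign of $h(t)$. Writing $U(t)=\exp\{-\int_{t_0}^t\frac{h'+\kappa}{h}\}\frac{D}{I}$, I have
\begin{align}
\frac{U'(t)}{U(t)}=-\frac{h'(t)+\kappa(t)}{h(t)}+\frac{D'(t)}{D(t)}-\frac{I'(t)}{I(t)},\notag
\end{align}
so the entire argument reduces to computing $I'(t)$ and $D'(t)$ and combining them. The first task is therefore to differentiate $I(t)=\int_M u^2\,dV$ and $D(t)=h\int_M|\nabla u|^2\,dV$, using the evolution $\eqref{volume}$ for the measure $dV$, the linear equation $\eqref{lpe}$ for $u$, and Lemma 3.3 for $\partial_t|\nabla u|^2$ under the Ricci flow.

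For $I'(t)$ I would differentiate under the integral sign, substitute $\partial_t u=\triangle u+au$ from $\eqref{lpe}$, use $\partial_t(dV)=-\frac{\triangle K}{K}dV$ from $\eqref{volume}$, and convert $\triangle$ into the drift Laplacian $\triangle_f$ via $\eqref{Bochner}$'s companion identity $\triangle u-\langle\nabla f,\nabla u\rangle=\triangle_f u$; the weighted integration-by-parts of Lemma 3.1 should collapse the $\triangle_f$ terms, leaving $I'(t)=2a(t)I(t)-2h(t)^{-1}D(t)$ up to the bookkeeping constants. For $D'(t)$ I would differentiate the product, apply Lemma 3.3 to the $\partial_t|\nabla u|^2$ piece, integrate the $-2|\nabla^2u|^2$ term against $dV$ and rewrite it through Lemma 3.2 as $-2\int(|\triangle_f u|^2-\text{\rm Ric}_f(\nabla u,\nabla u))\,dV$, and fold in the $\frac{h'}{h}D$ term coming from the factor $h(t)$. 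The cross term $2\langle\nabla u,\nabla(\partial_t-\triangle)u\rangle=2\langle\nabla u,\nabla(au)\rangle$ from $\eqref{lpe}$ contributes a further $2aD$ after integration by parts.

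Assembling these, the non-curvature contributions should organize into the Cauchy--Schwarz-type difference $\int|\triangle_f u|^2\,dV\cdot I-\big(\int\langle u,\triangle_f u\rangle\,dV\big)^2/I$, which is nonnegative by Cauchy--Schwarz on $L^2(dV)$, while the remaining curvature term is exactly $2h\int\text{\rm Ric}_f(\nabla u,\nabla u)\,dV$. The hypothesis $\text{\rm Ric}_f\le\frac{\kappa}{2h}g$ is precisely what turns this into a term bounded by $\kappa\int|\nabla u|^2\,dV=\frac{\kappa}{h}D$, and crucially the inequality flips direction depending on $\mathrm{sign}(h)$ because multiplying $\text{\rm Ric}_f(\nabla u,\nabla u)\le\frac{\kappa}{2h}|\nabla u|^2$ by $2h<0$ reverses it. After the $-\frac{h'+\kappa}{h}$ term cancels the spurious $\frac{h'}{h}$ and $\frac{\kappa}{h}$ pieces, what survives is $\frac{U'}{U}=\frac{2h}{D}\cdot(\text{nonnegative Cauchy--Schwarz term})+(\text{curvature remainder})$, whose sign is $\mathrm{sign}(h)$ in case (ii) and $-\mathrm{sign}(h)$ in case (i).

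The main obstacle I anticipate is the careful tracking of signs and of the factor $h(t)$ through the two differentiations, since the monotonicity direction hinges entirely on whether $h$ is positive or negative and on the correct orientation of the Cauchy--Schwarz inequality after multiplying by $2h$. The algebra is routine but the sign accounting is delicate: one must verify that the term $\int|\triangle_f u|^2\,dV\cdot I\ge\big(\int u\,\triangle_f u\,dV\big)^2$ enters with coefficient $h$, so that $h<0$ yields a nonpositive contribution to $D'/D-I'/I$ and hence monotone increase, while $h>0$ yields the reverse; correctly matching this against the curvature bound under the sign flip of the inequality $\text{\rm Ric}_f\le\frac{\kappa}{2h}g$ is where the proof must be handled most carefully.
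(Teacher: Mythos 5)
Your proposal is correct and follows essentially the same route as the paper's proof: the same computations of $I'(t)=2aI-\frac{2}{h}D$ and of $D'(t)$ via Lemma 3.3 and Lemma 3.2, the curvature hypothesis $\text{\rm Ric}_{f}\leq\frac{\kappa}{2h}g$ absorbed after multiplying by $2h$ (with the sign flip when $h<0$), and the Cauchy--Schwarz inequality applied to $\int_{M}u\,\triangle_{f}u\,dV=-\int_{M}|\nabla u|^{2}\,dV$ to conclude. The only difference is organizational --- the paper works with integrating factors $\hat{I}(t)=e^{-\int 2a}I(t)$ and $\hat{D}(t)=e^{-\int(2a+(h'+\kappa)/h)}D(t)$ and computes $\hat{I}^{2}U'=\hat{D}'\hat{I}-\hat{I}'\hat{D}$, which avoids your division by $D$ (note $D$ vanishes when $u$ is constant and is negative when $h<0$, so your quotient $D'/D$ incurs an extra sign flip that your final summary sentence records incorrectly, though the mechanism you identify is the right one) --- but the substance is identical.
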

\begin{proof}
We only give the proof of $(i)$.
Our main purpose is to compute the $I'(t)$ and $D'(t)$. Under the Ricci flow $\eqref{RF}$,  combining with the linear heat equation $\eqref{lpe}$ and Lemma 3.3, we can obtain
\begin{align}
\label{3.6}
(\partial_{t}-\triangle)|\nabla u|^{2} 
&=2\langle\nabla u,\nabla(\partial_{t}-\triangle)u\rangle-2|\nabla^{2} u|^{2}\\
&=2a(t)|\nabla u|^{2}-2|\nabla^{2} u|^{2}.\notag
\end{align}

According to $\eqref{volume}$ and integration by parts, we get the derivative of $I(t)$ as follow
\begin{align}
I^{\prime}(t) &=\int_{M}\left(2u\partial_{t}u-u^{2}\frac{\triangle K}{K}\right)dV\\
&=\int_{M}\left(2u\partial_{t}u-\triangle (u^{2})\right)dV\notag\\
&=\int_{M}\left(2u\partial_{t}u-2u\triangle u-2|\nabla u|^{2}\right)dV\notag\\
&=-\frac{2}{h}D(t)+2a(t) I(t).\notag
\end{align}

If we write
$$\hat{I}(t)=\exp\left\{-\int_{t_{0}}^{t} 2a(s)ds\right\}I(t),$$
then we can easily find 
\begin{align}
\hat{I}^{\prime}(t)=-\frac{2}{h}\exp\left\{-\int_{t_{0}}^{t} 2a(s)ds\right\}D(t).
\label{derivative of I}
\end{align}

Next it's turn to compute the derivative of $D(t)$. Using $\eqref{volume}$, $\eqref{3.6}$ and the boundedness of Bakry-\'{E}mery Ricci tensor, we obtain
\begin{align}
D^{\prime}(t) &=h^{\prime}\int_{M}|\nabla u|^{2}dV+h\int_{M}\left(\partial_{t}|\nabla u|^{2}-|\nabla u|^{2}\frac{\triangle K}{K}\right)dV\\
&=h^{\prime}\int_{M}|\nabla u|^{2}dV+h\int_{M}(\partial_{t}-\triangle)|\nabla u|^{2}dV\notag\\
&=(2ah+h^{\prime})\int_{M}|\nabla u|^{2}dV-2h\int_{M}|\nabla^{2} u|^{2}dV\notag\\
&=(2ah+h^{\prime})\int_{M}|\nabla u|^{2}dV-2h\int_{M}\left[|\triangle_{f}u|^{2}-\text{\rm Ric}_{f}(\nabla u,\nabla u)\right]dV\notag\\
&\geq(\kappa+2ah+h^{\prime})\int_{M}|\nabla u|^{2}dV-2h\int_{M}|\triangle_{f}u|^{2}dV\notag\\
&=\left(2a+\frac{h^{\prime}+\kappa}{h}\right)D(t)-2h\int_{M}|\triangle_{f}u|^{2}dV.\notag
\end{align}
Similarly, if we write 
$$\hat{D}(t)=\exp\left\{-\int_{t_{0}}^{t} \left[2a(s)+\frac{h^{\prime}(s)+\kappa(s)}{h(s)}\right]ds\right\} D(t),$$
then we can find 
\begin{align}
\hat{D}^{\prime}(t)\geq-2h\exp\left\{-\int_{t_{0}}^{t} \left[2a(s)+\frac{h^{\prime}(s)+\kappa(s)}{h(s)}\right]ds\right\}\int_{M}|\triangle_{f}u|^{2}dV.
\label{derivative of D}
\end{align}

Finally, the parabolic frequency $U(t)$ can be written as $U(t)=\frac{\hat{D}(t)}{\hat{I}(t)}.$ By $\eqref{derivative of I}$ and $\eqref{derivative of D}$, we can compute the derivative of $U(t)$
\begin{align}
\hat{I}^{2}(t)U'(t) &=\hat{D}^{\prime}(t)\hat{I}(t)-\hat{I}^{\prime}(t)\hat{D}(t)\\
&\geq-2h\exp\left\{\int_{t_{0}}^{t} \left[4a(s)+\frac{h^{\prime}(s)+\kappa(s)}{h(s)}\right]ds\right\}\notag\\
&\quad\cdot\left[\left(\int_{M}|\triangle_{f}u|^{2}dV\right)\cdot\left(\int_{M}|u|^{2}dV\right)-\left(\int_{M}|\nabla u|^{2}dV\right)^{2}\right]\notag\\
&\geq-2h\exp\left\{\int_{t_{0}}^{t} \left[4a(s)+\frac{h^{\prime}(s)+\kappa(s)}{h(s)}\right]ds\right\}\notag\\
&\quad\cdot\left[\left(\int_M\langle u(t),\triangle_{f}u\rangle dV\right)^{2}-\left(\int_{M}|\nabla u|^{2}dV\right)^{2} \right]\notag\\
&= 0\notag
\end{align}
the last inequality is directly obtained by the definition of $D(t)$ the Cauchy-Schwarz inequality. 
\end{proof}

Then we have the following corollary.
\begin{corollary}
Let $(M^{n},g(t))_{t\in[0,T)}$ be Ricci flow $\eqref{RF}$ with $\text{\rm Ric}_{f(t)}\leq \frac{\kappa(t)}{2h(t)}g(t)$, where $h(t)$ is negative. If $u(\cdot,t_{1})=0$, then $u(\cdot,t)\equiv 0$ for any $t\in[t_{0},t_{1}]\subset(0,T)$.
\end{corollary}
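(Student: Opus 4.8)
The plan is to deduce this unique continuation statement from the monotonicity of $U(t)$ proved in Theorem 3.4$(i)$, via a Gronwall-type lower bound on $\log\hat I(t)$. First I would recall the auxiliary quantity $\hat I(t)=\exp\{-\int_{t_{0}}^{t}2a(s)\,ds\}\,I(t)$ from the proof of that theorem, whose logarithmic derivative, by $\eqref{derivative of I}$ together with $D(t)=h(t)\int_{M}|\nabla u|^{2}\,dV$, is
\[
(\log\hat I)'(t)=\frac{\hat I'(t)}{\hat I(t)}=-\frac{2}{h(t)}\frac{D(t)}{I(t)}=-2\,\frac{\int_{M}|\nabla u|^{2}\,dV}{\int_{M}u^{2}\,dV}.
\]
Introducing the strictly positive factor $\Phi(t):=\exp\{\int_{t_{0}}^{t}\frac{h'(s)+\kappa(s)}{h(s)}\,ds\}$, so that $D(t)/I(t)=\Phi(t)U(t)$, I rewrite this as $(\log\hat I)'(t)=-\tfrac{2}{h(t)}\Phi(t)\,U(t)$, where the prefactor $-2/h(t)$ is strictly positive because $h<0$.

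Next I would argue by contradiction: suppose $u$ is not identically zero on $[t_{0},t_{1}]$, so that $I(s)>0$ for some $s\in[t_{0},t_{1})$. Set $t^{*}=\sup\{t\in[s,t_{1}]:I>0 \text{ on }[s,t)\}$. On the interval $[s,t^{*})$ the quantity $I$ is positive, hence $U$ is finite there, and by the monotonicity established in Theorem 3.4$(i)$ we have $U(t)\ge U(s)$. Multiplying this inequality by the positive quantity $-\tfrac{2}{h(t)}\Phi(t)$ yields the pointwise lower bound $(\log\hat I)'(t)\ge-\tfrac{2}{h(t)}\Phi(t)\,U(s)$ on $[s,t^{*})$.

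Since $h$ is continuous and negative, it is bounded away from $0$ on the compact interval $[s,t_{1}]$, and $\Phi$ is continuous there, so the right-hand side is a bounded function of $t$. Integrating from $s$ to any $t<t^{*}$ gives $\log\hat I(t)\ge\log\hat I(s)-C$ for a finite constant $C$, that is $\hat I(t)\ge\hat I(s)\,e^{-C}>0$; by continuity this positive lower bound persists at $t^{*}$, forcing $I(t^{*})>0$. This rules out $t^{*}<t_{1}$ (where $I$ would first vanish), so $t^{*}=t_{1}$ and $I(t_{1})>0$, contradicting $u(\cdot,t_{1})=0$. Hence $I(s)=0$ for every $s\in[t_{0},t_{1})$, and together with $I(t_{1})=0$ this gives $u(\cdot,t)\equiv0$ for all $t\in[t_{0},t_{1}]$.

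The step I expect to be the main obstacle is the degeneracy of the frequency $U(t)$ as $t\to t_{1}$, where both $D$ and $I$ tend to $0$ and $U$ may blow up; the maximality argument above, isolating the first time at which $I$ could vanish and showing it cannot occur before $t_{1}$, is precisely what handles this cleanly. It is essential here that $h$ remain bounded away from zero and that $\Phi$ be bounded, so that the integral of the lower bound for $(\log\hat I)'$ stays finite; the factors $\exp\{-\int 2a\}$ and $\Phi$ never vanish, so passing between $\hat I$, $I$, and $U$ does not affect any vanishing statement.
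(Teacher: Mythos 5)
Your proof is correct and takes essentially the same route as the paper: both integrate the identity $\frac{d}{dt}\log I(t)=-\frac{2}{h(t)}\exp\bigl\{\int_{t_{0}}^{t}\frac{h'(s)+\kappa(s)}{h(s)}\,ds\bigr\}\,U(t)+2a(t)$, invoke the monotonicity of $U$ from Theorem 3.4$(i)$ together with the boundedness of $a$ and $h$ to obtain $I(t_{1})\geq c\,I(t')$ with $c>0$, which forces $I(t')=0$ once $I(t_{1})=0$. Your only addition is the first-vanishing-time argument ensuring $I>0$ (hence $U$ and $\log I$ are defined) on the interval of integration, a point the paper's proof leaves implicit.
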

\begin{proof}
Recalling the definition of $U(t)$, we get
\begin{align}
\frac{d}{dt}\log(I(t))&=\frac{I'(t)}{I(t)}=-\frac{2D(t)}{h(t)I(t)}+2a(t)\\
&=-\frac{2}{h(t)}\exp\left\{\int_{t_{0}}^{t}\frac{h'(s)+\kappa(s)}{h(s)} ds\right\}U(t)+2a(t).\notag
\label{log I}
\end{align}

According to Theorem 3.4 and integrating (3.13) from $t'$ to $t_{1}$ for any $t'\in[t_{0},t_{1}]$,  yields
\begin{align}
\log I(t_{1})-\log I(t')&\geq -2\int_{t'}^{t_{1}}\exp\left\{\int_{t_{0}}^{t}\frac{h'(s)+\kappa(s)}{h(s)} ds\right\}\frac{U(t)}{h(t)}dt+2\int_{t'}^{t_{1}}a(t)dt\notag\\
&\geq -2U(t_{0})\int_{t'}^{t_{1}}\exp\left\{\int_{t_{0}}^{t}\frac{h'(s)+\kappa(s)}{h(s)} ds\right\}\frac{dt}{h(t)}+2\int_{t'}^{t_{1}}a(t)dt\notag
\end{align}
Since $a(t),h(t)$  are finite, it follows from the last inequality that
\begin{align}
\frac{I(t_{1})}{I(t')}\geq\exp \left(-2U(t_{0})\int_{t'}^{t_{1}}\exp\left\{\int_{t_{0}}^{t}\frac{h'(s)+\kappa(s)}{h(s)} ds\right\}\frac{dt}{h(t)}+2\int_{t'}^{t_{1}}a(t)dt\right)
\end{align}
which implies Corollary 3.5.
\end{proof}

${}$

\subsection{Parabolic frequency for the linear heat equation under the Ricci-harmonic flow}
Next, we consider parabolic frequency  under the Ricci-harmonic flow $\eqref{RHF}$. Similarly, we will give the following Lemma.
\begin{lemma}
Under the Ricci-harmonic flow $\eqref{RHF}$, for a family of smooth functions $u(t)$ on $M$, the norm of $|\nabla_{g(t)} u(t)|_{g(t)}^{2}$ satisfies the evolution equation:
\begin{align}
(\partial_{t}-\triangle_{g(t)})|\nabla_{g(t)} u(t)|_{g(t)}^{2}=&2\langle\nabla_{g(t)} u(t),\nabla_{g(t)}(\partial_{t}-\triangle_{g(t)}) u(t)\rangle_{g(t)}\notag\\
&-2|\nabla_{g(t)}^{2} u(t)|_{g(t)}^{2}-2\alpha(t)\langle\nabla_{g(t)}\phi(t),\nabla_{g(t)} u(t)\rangle_{g(t)}^{2}\notag
\end{align}
\end{lemma}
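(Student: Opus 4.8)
The plan is to mirror the computation of Lemma~3.3 (the Ricci flow case) but carefully track the extra terms coming from the Ricci-harmonic flow, where the metric evolves by $\partial_{t}g = -2\text{\rm Ric} + 2\alpha\,d\phi\otimes d\phi$ rather than by $-2\text{\rm Ric}$ alone. Everything reduces to two ingredients: the time-derivative of $|\nabla u|^{2}$ coming from the evolution of the inverse metric, and the standard Bochner formula, which is purely spatial and therefore unchanged.

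First I would compute $\partial_{t}|\nabla u|^{2}$. Writing $|\nabla u|^{2}=g^{ij}\nabla_{i}u\,\nabla_{j}u$, the time derivative splits into the term from $\partial_{t}g^{ij}$ and the term $2g^{ij}\nabla_{i}u\,\nabla_{j}(\partial_{t}u)$. Since $\partial_{t}g_{ij} = -2R_{ij} + 2\alpha\nabla_{i}\phi\nabla_{j}\phi$, one has $\partial_{t}g^{ij} = 2R^{ij} - 2\alpha\nabla^{i}\phi\nabla^{j}\phi$, so that
\begin{align}
\partial_{t}|\nabla u|^{2} = 2\text{\rm Ric}(\nabla u,\nabla u) - 2\alpha\langle\nabla\phi,\nabla u\rangle^{2} + 2\langle\nabla u,\nabla\partial_{t}u\rangle.\notag
\end{align}
This is exactly the Ricci-flow identity $(3.4)$ from Lemma~3.3 plus the single correction term $-2\alpha\langle\nabla\phi,\nabla u\rangle^{2}$. (Here the time-derivative commutes with the spatial gradient on the scalar $\partial_t u$, so the last term is unambiguous.)

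Next I would subtract $\triangle|\nabla u|^{2}$, which I replace using the ordinary Bochner formula $\triangle|\nabla u|^{2} = 2|\nabla^{2}u|^{2} + 2\langle\nabla u,\nabla\triangle u\rangle + 2\text{\rm Ric}(\nabla u,\nabla u)$. The two $\text{\rm Ric}(\nabla u,\nabla u)$ contributions cancel exactly as in the Ricci flow proof, the Hessian term produces $-2|\nabla^{2}u|^{2}$, and combining $\langle\nabla u,\nabla\partial_{t}u\rangle - \langle\nabla u,\nabla\triangle u\rangle = \langle\nabla u,\nabla(\partial_{t}-\triangle)u\rangle$ gives the first term of the claimed identity. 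The leftover $-2\alpha\langle\nabla\phi,\nabla u\rangle^{2}$ survives untouched and is precisely the third term in the statement, so the result follows.

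I do not expect a genuine obstacle here; the only point requiring care is the sign and placement of the $\alpha\,d\phi\otimes d\phi$ contribution when passing from $\partial_{t}g_{ij}$ to $\partial_{t}g^{ij}$, and the observation that the $\phi$-dependence enters only through the metric evolution (the Bochner formula itself is fixed-metric and so is identical to the Ricci flow case). Hence the proof is a one-line modification of Lemma~3.3, and I would present it by simply recomputing $(3.4)$ with the extra term and then quoting the Bochner formula verbatim.
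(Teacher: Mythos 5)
Your proposal is correct and follows the paper's own proof essentially verbatim: the paper likewise computes $\partial_{t}|\nabla u|^{2}=2\operatorname{Ric}(\nabla u,\nabla u)-2\alpha\langle\nabla\phi,\nabla u\rangle^{2}+2\langle\nabla u,\nabla\partial_{t}u\rangle$ from the metric evolution and then cancels the Ricci terms against the fixed-metric Bochner formula. Your explicit derivation of $\partial_{t}g^{ij}=2R^{ij}-2\alpha\nabla^{i}\phi\nabla^{j}\phi$ is a detail the paper leaves implicit, but the argument is the same.
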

\begin{proof}
By calculating straightly, we have
\begin{align}
\partial_{t}|\nabla u|^{2}=2\text{\rm Ric}(\nabla u,\nabla u)-2\alpha(t)\langle\nabla\phi,\nabla u\rangle^{2}+2\langle\nabla u,\nabla\partial_{t}u\rangle.
\end{align}
Together with the Bochner formula, yields
\begin{align}
(\partial_{t}-\triangle)|\nabla u|^{2}&=2\text{\rm Ric}(\nabla u,\nabla u)-2\alpha(t)\langle\nabla\phi,\nabla u\rangle^{2}+2\langle\nabla u,\nabla\partial_{t}u\rangle\\
&\quad-2|\nabla^{2} u|^{2}-2\langle\nabla u,\nabla\triangle u\rangle-2\text{\rm Ric}(\nabla u,\nabla u)\notag\\
&=-2|\nabla^{2} u|^{2}+2\langle\nabla u,\nabla(\partial_{t}-\triangle) u\rangle-2\alpha(t)\langle\nabla\phi,\nabla u\rangle^{2}\notag
\end{align}
Then we get the desired result.
\end{proof}
\begin{theorem}
Suppose $(M^{n},g(t),\phi(t))_{t\in[0,T)}$ is the Ricci-harmonic flow $\eqref{RHF}$ with $\text{\rm Ric}_{f(t)}-\alpha(t)\langle\nabla_{g(t)}\phi(t),\cdot\rangle_{g(t)}^{2}\leq \frac{\kappa(t)}{2h(t)}g(t)$,

(i) If $h(t)$ is a negative time-dependent function, then the parabolic frequency $U(t)$ is monotone increasing along the Ricci-harmonic flow.

(ii) If $h(t)$ is a positive time-dependent function, then the parabolic frequency $U(t)$ is monotone decreasing along the Ricci-harmonic flow.
\end{theorem}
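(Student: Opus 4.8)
The plan is to follow verbatim the scheme of the proof of Theorem 3.4, computing $I'(t)$ and $D'(t)$ separately and then studying the sign of the derivative of the quotient $U(t)=\hat{D}(t)/\hat{I}(t)$ after absorbing the exponential weight. The only structural change from the Ricci flow case is that the evolution equation for $|\nabla u|^{2}$ now carries the extra harmonic-map term recorded in Lemma 3.6, and the curvature hypothesis is correspondingly modified to $\text{\rm Ric}_{f}-\alpha\langle\nabla\phi,\cdot\rangle^{2}\leq\frac{\kappa}{2h}g$. I will present only case $(i)$, with $h<0$; case $(ii)$ is identical with the inequalities reversed.

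First I would compute $I'(t)$. Since the weighted measure evolves by $\partial_{t}(dV)=-\frac{\triangle K}{K}dV$ under \emph{both} flows by $\eqref{volume}$, the computation of $I'(t)$ is word-for-word the same as in the Ricci flow case, giving $I'(t)=-\frac{2}{h}D(t)+2a(t)I(t)$; hence, writing $\hat{I}(t)=\exp\{-\int_{t_{0}}^{t}2a(s)\,ds\}I(t)$, one gets $\hat{I}'(t)=-\frac{2}{h}\exp\{-\int_{t_{0}}^{t}2a(s)\,ds\}D(t)$. Next I would compute $D'(t)$: after differentiating and using $\eqref{volume}$ together with integration by parts to replace $\partial_{t}|\nabla u|^{2}-|\nabla u|^{2}\frac{\triangle K}{K}$ by $(\partial_{t}-\triangle)|\nabla u|^{2}$, I substitute Lemma 3.6 and the linear heat equation $(\partial_{t}-\triangle)u=au$ to obtain
$$(\partial_{t}-\triangle)|\nabla u|^{2}=2a|\nabla u|^{2}-2|\nabla^{2}u|^{2}-2\alpha\langle\nabla\phi,\nabla u\rangle^{2}.$$
Applying Lemma 3.2 to rewrite $\int_{M}|\nabla^{2}u|^{2}\,dV$ in terms of $|\triangle_{f}u|^{2}$ and $\text{\rm Ric}_{f}(\nabla u,\nabla u)$, the two curvature-type contributions merge into the single term $2h\int_{M}\big[\text{\rm Ric}_{f}(\nabla u,\nabla u)-\alpha\langle\nabla\phi,\nabla u\rangle^{2}\big]\,dV$.

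This merged term is exactly the place where the modified hypothesis enters, and I expect it to be the only delicate point in the bookkeeping. Bounding the bilinear form by $\frac{\kappa}{2h}|\nabla u|^{2}$ and multiplying by $2h<0$ reverses the inequality and produces $2h\int_{M}[\cdots]\,dV\geq\kappa\int_{M}|\nabla u|^{2}\,dV$, whence
$$D'(t)\geq\Big(2a+\tfrac{h'+\kappa}{h}\Big)D(t)-2h\int_{M}|\triangle_{f}u|^{2}\,dV.$$
The whole argument works precisely because $\alpha\langle\nabla\phi,\nabla u\rangle^{2}$ appears with exactly the coefficient needed to combine with $\text{\rm Ric}_{f}(\nabla u,\nabla u)$ and be controlled by the hypothesis; no leftover harmonic-map term survives.

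Finally, setting $\hat{D}(t)=\exp\{-\int_{t_{0}}^{t}(2a+\frac{h'+\kappa}{h})\,ds\}D(t)$ so that $U=\hat{D}/\hat{I}$, the sign of $U'(t)$ is governed by $\hat{D}'\hat{I}-\hat{I}'\hat{D}$. Combining the two estimates above, this expression is bounded below, exactly as in $\eqref{derivative of D}$ and the final display of Theorem 3.4, by $-2h$ times a nonnegative factor and the bracket $\big(\int_{M}\langle u,\triangle_{f}u\rangle\,dV\big)^{2}-\big(\int_{M}|\nabla u|^{2}\,dV\big)^{2}$, which vanishes by the definition of $D(t)$, while the preceding Cauchy--Schwarz step $\big(\int_{M}\langle u,\triangle_{f}u\rangle\,dV\big)^{2}\leq\big(\int_{M}|\triangle_{f}u|^{2}\,dV\big)\big(\int_{M}u^{2}\,dV\big)$ keeps the inequality in the correct direction thanks to $-2h>0$. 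This yields $\hat{I}^{2}(t)U'(t)\geq0$, so $U(t)$ is monotone increasing, as claimed.
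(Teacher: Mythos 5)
Your proposal is correct and follows essentially the same route as the paper: the same computation of $I'(t)$ via the common evolution $\partial_{t}(dV)=-\frac{\triangle K}{K}dV$, the same use of Lemma 3.6 and Lemma 3.2 to produce the merged term $2h\int_{M}\bigl[\text{\rm Ric}_{f}(\nabla u,\nabla u)-\alpha\langle\nabla\phi,\nabla u\rangle^{2}\bigr]dV$, the same sign flip from $2h<0$ against the hypothesis, and the same Cauchy--Schwarz conclusion. The only difference is cosmetic: you absorb the exponential factors into hatted quantities $\hat{I},\hat{D}$ as in the paper's Theorem 3.4, whereas the paper's proof of this theorem differentiates the quotient with the exponential weight directly, an algebraically identical step.
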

\begin{proof}
With the same discussion in (3.8), using integration by parts, we have
\begin{align}
I'(t)
=-\frac{2}{h(t)}D(t)+2a(t)I(t)
\end{align}

According to the Bochner formula, together with Lemma 3.2 and Lemma 3.6, we have the derivative of $D(t)$
\begin{align}
D'(t)&=h'(t)\int_{M}|\nabla u|^{2} d V +h(t)\int_{M}\partial_{t}|\nabla u|^{2} d V+\int_{M}|\nabla u|^{2} \partial_{t}(d V)\\
&=h'(t)\int_{M}|\nabla u|^{2} d V+2h(t)\int_{M}(\partial_{t}-\triangle)|\nabla u|^{2}dV\notag\\
&=h'(t)\int_{M}|\nabla u|^{2} d V+2h(t)\int_{M}\left[\langle\nabla u,\nabla(\partial_{t}-\triangle)u\rangle-|\nabla^{2}u|^{2}\right.\notag\\
&\quad\left.-\alpha(t)\langle\nabla\phi,\nabla u\rangle^{2}\right]d V\notag\\
&=\left(\frac{h'(t)}{h(t)}+2a(t)\right)D(t)-2h(t)\int_{M}|\triangle_{f}u|^{2}d V\notag\\
&\quad +2h(t)\int_{M}\left[\text{\rm Ric}_{f}(\nabla u,\nabla u)-\alpha(t)\langle\nabla\phi,\nabla u\rangle^{2}\right]d V\notag
\end{align}

If $h(t)<0$, then according to (3.18), yields
\begin{align}
D'(t)\geq \left(\frac{h'(t)+\kappa(t)}{h(t)}+2a(t)\right)D(t)-2h(t)\int_{M}|\triangle_{f}u|^{2}d V
\end{align}

Thus, combining (3.17) and (3.19), applying H\"{o}lder's inequality, we have the following
\begin{align}
I^{2}(t)U'(t)&=\exp\left\{-\int_{t_{0}}^{t}\frac{h'(s)+\kappa(s)}{h(s)} ds\right\}\\ &\quad\cdot\left[-I(t)D(t)\left(\frac{h'(t)+\kappa(t)}{h(t)}\right)+I(t)D'(t)-I'(t)D(t)
\right]\notag\\
&\geq -2h(t)\exp\left\{-\int_{t_{0}}^{t}\frac{h'(s)+\kappa(s)}{h(s)} ds\right\}\notag\\
&\quad\cdot\left[\left(\int_{M}|\triangle_{f}u|^{2}d V\right)\left(\int_{M} u^{2} d V\right)-\left(\int_{M}|\nabla u|^{2}dV\right)^{2}\right]\notag\\
&\geq-2h(t)\exp\left\{-\int_{t_{0}}^{t}\frac{h'(s)+\kappa(s)}{h(s)} ds\right\}\notag\\
&\quad\cdot\left[\left(\int_M\langle u(t),\triangle_{f}u\rangle dV\right)^{2}-\left(\int_{M}|\nabla u|^{2}dV\right)^{2} \right]\notag\\
&= 0. \notag
\end{align}

Similarly, if $h(t)>0$ and recalling (3.18), we have 
\begin{align}
D'(t)\leq \left(\frac{h'(t)+\kappa(t)}{h(t)}+2a(t)\right)D(t)-2h(t)\int_{M}|\triangle_{f}u|^{2}d V.
\end{align}
Combining with (3.17) and (3.21), we obtain $I^{2}(t)U'(t)\leq 0$. Thus, we prove this Theorem.
\end{proof}

We define the first nonzero eigenvalue of the Ricci-harmonic flow $(M^{n},g(t),\phi(t))$ with the weighted measure $dV_{g(t)}$ by
$$\lambda(t)=\inf\left\{\left.\frac{\int_{M}|\nabla_{g(t)}u|^{2}_{g(t)}d V_{g(t)}}{\int_{M}u^{2}d V_{g(t)}}\right| u\in C^{\infty}(M)\setminus \{0\}\right\}.$$

Then we have the following corollary by Theorem 3.7.

\begin{corollary}
If $(M^{n},g(t),\phi(t))_{t\in[0,T)}$ is the Ricci-harmonic flow with $\text{\rm Ric}_{f(t)}-\alpha(t)\langle\nabla_{g(t)}\phi(t),\cdot\rangle_{g(t)}^{2}\leq \frac{\kappa(t)}{2h(t)}g(t)$, then for any $t\in[t_{0},t_{1}]\subset(0,T)$, the following holds.

(i) If $h(t)<0$, $h(t)\lambda(t)$ is a monotone increasing function.

(ii) If $h(t)>0$, $h(t)\lambda(t)$ is a monotone decreasing function.
\end{corollary}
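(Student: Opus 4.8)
The plan is to derive Corollary 3.8 from the monotonicity established in Theorem 3.7 by making a judicious choice of the test function in the Rayleigh quotient defining $\lambda(t)$. The key observation is that $\lambda(t)$ is precisely the infimum of $D(t)/(h(t)I(t))$ (up to the sign of $h$), so the monotonicity of $h(t)\lambda(t)$ should be inherited from the monotonicity of the frequency $U(t)$, which controls the ratio $D(t)/I(t)$. I will treat case (i) with $h(t)<0$ in detail; case (ii) follows by the symmetric argument with the inequalities reversed.

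First I would fix a time $t_{0}\in[t_{0},t_{1}]$ and let $u_{0}$ be an eigenfunction realizing the infimum $\lambda(t_{0})$, i.e. a minimizer of the Rayleigh quotient at time $t_{0}$. Then I would evolve $u_{0}$ by the linear heat equation $\eqref{lpe}$ (with an appropriate choice of $a(t)$, for instance $a\equiv 0$, so that $\eqref{heat equation}$ is the relevant flow) to obtain a family $u(t)$ with $u(t_{0})=u_{0}$. For this solution one has $U(t_{0}) = \exp\{0\}\,h(t_{0})\lambda(t_{0})$ by definition, since at $t=t_{0}$ the exponential prefactor is $1$ and $D(t_{0})/I(t_{0}) = h(t_{0})\lambda(t_{0})$. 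For any later time $t$, the variational characterization gives the one-sided bound
\begin{equation}
\frac{\int_{M}|\nabla_{g(t)}u(t)|^{2}\,dV_{g(t)}}{\int_{M}u(t)^{2}\,dV_{g(t)}}\ \geq\ \lambda(t),
\notag
\end{equation}
because $u(t)$ is an admissible competitor in the infimum defining $\lambda(t)$.

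Next I would combine this with Theorem 3.7. When $h(t)<0$, Theorem 3.7 gives that $U(t)$ is monotone increasing, so $U(t)\geq U(t_{0})$ for $t\geq t_{0}$. Unwinding the definition of $U(t)$ and using $h(t)<0$ (which flips the direction of the inequality when we multiply through), the variational bound together with $U(t)\geq U(t_{0})=h(t_{0})\lambda(t_{0})$ should yield $h(t)\lambda(t)\geq h(t_{0})\lambda(t_{0})$, i.e. $h(t)\lambda(t)$ is monotone increasing. The careful bookkeeping of the exponential factor $\exp\{-\int_{t_{0}}^{t}(h'+\kappa)/h\,ds\}$ and the sign of $h$ is what turns the abstract frequency monotonicity into the concrete statement about $h(t)\lambda(t)$; this is the step I expect to require the most attention, since one must verify that the prefactor does not spoil the inequality and that the eigenfunction chosen at $t_{0}$ genuinely produces a lower bound for $\lambda$ at later times rather than an upper bound.

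The main obstacle, and the point that needs genuine justification rather than formal manipulation, is that $\lambda(t)$ is defined by an infimum over \emph{all} admissible functions at each fixed time, whereas the heat-evolved family $u(t)$ is only a single competitor. Thus I can only extract a one-sided comparison, and I must confirm that the direction of this comparison is consistent with the sign of $h$ and with the direction of monotonicity of $U(t)$ supplied by Theorem 3.7. A subtlety is that the evolved $u(t)$ need not remain an eigenfunction at later times, so the bound $D(t)/(h(t)I(t))\geq\lambda(t)$ is generally strict; nonetheless, evaluating at the reference time $t_{0}$ where equality holds is exactly what pins down the inequality in the correct direction. Once this comparison is set up correctly, the conclusion follows immediately, and the positive-$h$ case (ii) is handled by reversing every inequality.
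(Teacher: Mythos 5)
Your proposal is the paper's intended argument: the paper gives no written proof of Corollary 3.8 beyond the sentence ``we have the following corollary by Theorem 3.7,'' and the derivation it has in mind is exactly yours --- pick a minimizer $u_{0}$ of the Rayleigh quotient at the earlier time, extend it forward by the linear heat equation (any $a$ works, since the $a$-terms cancel in $U$; $a\equiv 0$ is fine), note $U(t_{0})=h(t_{0})\lambda(t_{0})$, and for $t>t_{0}$ combine the monotonicity $U(t)\geq U(t_{0})$ from Theorem 3.7 with the competitor bound $D(t)/I(t)=h(t)R(t)\leq h(t)\lambda(t)$, which holds for $h<0$ because the Rayleigh quotient $R(t)$ of the evolved $u(t)$ satisfies $R(t)\geq\lambda(t)$. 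Your choice of the eigenfunction at the \emph{earlier} time is the correct direction. One point you leave implicit: if $\lambda$ is genuinely the first \emph{nonzero} eigenvalue (as literally written, the paper's infimum over all nonzero smooth functions is $0$, attained by constants), admissibility of $u(t)$ at later times needs $\int_{M}u\,dV_{g(t)}=0$ to be preserved; this does hold, since $dV$ is the conjugate heat kernel measure, so $\frac{d}{dt}\int_{M}u\,dV_{g(t)}=\int_{M}(\partial_{t}-\triangle_{g(t)})u\,dV_{g(t)}=a(t)\int_{M}u\,dV_{g(t)}=0$ when $a\equiv0$.

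The step you defer --- ``careful bookkeeping of the exponential factor'' --- is, however, not mere bookkeeping: it is exactly where the statement as literally phrased fails to follow. Carrying the prefactor through, your chain of inequalities proves
\begin{equation}
h(t_{0})\lambda(t_{0})=U(t_{0})\leq U(t)\leq \exp\left\{-\int_{t_{0}}^{t}\frac{h'(s)+\kappa(s)}{h(s)}\,ds\right\}h(t)\lambda(t),
\notag
\end{equation}
i.e.\ monotonicity of the \emph{weighted} quantity $\exp\{-\int_{t_{0}}^{t}(h'+\kappa)/h\,ds\}\,h(t)\lambda(t)$; since $\exp\{-\int_{t_{0}}^{t}h'/h\,ds\}=h(t_{0})/h(t)$, this is equivalent to $e^{-\int_{t_{0}}^{t}\kappa/h\,ds}\lambda(t)$ being nonincreasing when $h<0$. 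That coincides with ``$h\lambda$ nondecreasing'' only when $h'+\kappa\geq 0$ (e.g.\ $\kappa\equiv-h'$ makes the prefactor identically $1$). With $\kappa\equiv 0$, for instance, the argument yields only that $\lambda$ is nonincreasing, which does not control $h\lambda$ when $h'<0$: on a flat torus with $f$, $\phi$ constant (so $\text{\rm Ric}_{f}-\alpha\langle\nabla\phi,\cdot\rangle^{2}=0\leq\frac{\kappa}{2h}g$ with $\kappa=0$), $\lambda$ is a positive constant, and $h(t)=-e^{t}$ gives $h\lambda=-e^{t}\lambda$ strictly \emph{decreasing}, while the weighted quantity is constant. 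So you cannot conclude $h(t)\lambda(t)\geq h(t_{0})\lambda(t_{0})$ without the factor. To be fair, this imprecision sits in the paper's own statement of Corollary 3.8 (the honest corollary of Theorem 3.7 carries the same exponential weight as $U$ itself), so your proposal faithfully reproduces the intended proof, gap included; a complete writeup should either keep the weight $\exp\{-\int_{t_{0}}^{t}\frac{h'(s)+\kappa(s)}{h(s)}ds\}$ in the conclusion or add the hypothesis $h'+\kappa\geq 0$ (resp.\ $\leq 0$ in case (ii)).
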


If we take $\phi(t)$ as a time-dependent smooth function, we can get similar conclusion with Corollary 3.8 for Ricci flow.

\begin{corollary}
Suppose $(M^{n},g(t),\phi(t))_{t\in[0,T)}$ is the Ricci-harmonic flow $\eqref{RHF}$  with $\text{\rm Ric}_{f(t)}-\alpha(t)\langle\nabla_{g(t)}\phi(t),\cdot\rangle_{g(t)}^{2}\leq \frac{\kappa(t)}{2h(t)}g(t)$, where $h(t)$ is negative. If $h(t)<0$ and $u(\cdot,t_{1})=0$, then $u(\cdot,t)\equiv 0$ for any $t\in[t_{0},t_{1}]\in(0,T)$.
\end{corollary}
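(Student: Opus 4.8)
The plan is to mirror the argument for Corollary 3.5, substituting Theorem 3.7 for Theorem 3.4. The key observation is that under the Ricci-harmonic flow the evolution of $I(t)$ is still given by (3.17), namely $I'(t)=-\frac{2}{h(t)}D(t)+2a(t)I(t)$, so once we have the monotonicity of $U(t)$ we can convert it into a quantitative lower bound on $I(t)$ that rules out interior vanishing.

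First I would form the logarithmic derivative of $I$. Dividing (3.17) by $I(t)$ and rewriting $D(t)/I(t)$ via the definition of $U(t)$ gives
\begin{align*}
\frac{d}{dt}\log I(t)=-\frac{2}{h(t)}\exp\left\{\int_{t_0}^{t}\frac{h'(s)+\kappa(s)}{h(s)}ds\right\}U(t)+2a(t).
\end{align*}
Since $h(t)<0$ we have $D(t)=h(t)\int_M|\nabla u|^2\,dV\le 0$, hence $U(t)\le 0$, while Theorem 3.7(i) says $U(t)$ is nondecreasing, so $U(t)\ge U(t_0)$ on $[t_0,t_1]$. As $-2/h(t)>0$ and the exponential factor is positive, replacing $U(t)$ by $U(t_0)$ turns the identity above into a one-sided differential inequality for $\log I(t)$.

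Integrating this inequality from $t'$ to $t_1$ (for arbitrary $t'\in[t_0,t_1]$) and exponentiating yields a bound of the form $I(t_1)/I(t')\ge \exp(\Lambda)$ with $\Lambda$ a finite constant, exactly analogous to (3.14); finiteness of $\Lambda$ uses that $a(t)$ and $h(t)$ are finite on the compact interval. The hypothesis $u(\cdot,t_1)=0$ gives $I(t_1)=0$, and the strictly positive lower bound then forces $I(t')=0$; since $I(t')=\int_M u^2(t')\,dV$, this means $u(\cdot,t)\equiv 0$ throughout $[t_0,t_1]$.

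The step needing the most care is the final vanishing argument, which must be phrased to avoid dividing by zero: if $I(t')>0$ at some interior $t'$ the bound would force $I(t_1)>0$, contradicting $u(\cdot,t_1)=0$, so no such $t'$ exists. The remaining work is a direct transcription of the Ricci flow computation, with the sign of $h$ tracked consistently so that Theorem 3.7 enters with the correct monotonicity direction.
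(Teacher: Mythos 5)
Your proposal is correct and takes essentially the same route as the paper: the paper's proof of this corollary is literally ``with the same discussion of Corollary 3.5,'' i.e.\ the transcription you carry out, using the identity $I'(t)=-\tfrac{2}{h}D(t)+2a(t)I(t)$, rewriting $D/I$ via $U$, invoking Theorem 3.7(i) to replace $U(t)$ by $U(t_0)$ (valid since $-2/h(t)>0$), and integrating to get the positive lower bound on $I(t_1)/I(t')$ that forces $I\equiv 0$. Your explicit handling of the division-by-zero issue (arguing by contradiction from $I(t')>0$ rather than taking $\log I$ where $I$ may vanish) is a minor refinement of a point the paper leaves implicit, but it does not change the argument.
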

\begin{proof}
With the same discussion of Corollary 3.5, we obtain the desired result.
\end{proof}

\section{parabolic frequency for heat equation}

In this section, we study the parabolic frequency for the solution of heat equation $\eqref{heat equation}$ under the closed Ricci flow $\eqref{RF}$ and the closed Ricci-harmonic flow $\eqref{RHF}$, here we use Li-Yau Harnack inequality in \cite{heat equation under RF by Cao xiaodong} and Hamilton's estimate in \cite{heat equation by Hamilton} to replace bounded Bakry-\'{E}mery Ricci curvature by bounded Ricci curvature.

For a function $u=u(t):M\times[t_{0},t_{1}]\rightarrow \mathbb{R}^{+}$ with $u(t),\partial_{t}u(t)\in W_{0}^{2,2}(dV_{g(t)})$ and for all $t\in[t_{0},t_{1}]\subset(0,T)$, we define the parabolic frequency $U(t)$
\begin{align}
I(t)&=\int_{M} u^{2}(t) d V_{g(t)},\\
D(t)&=h(t)\int_{M}|\nabla_{g(t)}u(t)|_{g(t)}^{2}d V_{g(t)},\\
U(t)&=\exp\left\{-\int_{t_{0}}^{t}\left(\frac{h'(s)}{h(s)}+2Kn+\frac{C(s)}{2}n+\frac{n}{s}\right) ds\right\}\frac{D(t)}{I(t)},
\end{align}
where $h(t)$ is a time-dependent function, $K$ is a positive constant, $n$ is the dimension of $M$, $C(t)=N/t$, $N=\log\frac{A}{\eta},\ \eta=\min_{M}u(0)$ and $A=\max_{M}u(0)$. Observe that, $A$ and $\eta$ are both positive constants.

${}$

\subsection{Parabolic frequency for heat equation under closed Ricci flow}
Before computing the monotonicity of $U(t)$, we give some lemmas from \cite{heat equation under RF by Cao xiaodong} and \cite{heat equation by Hamilton}.
\begin{lemma}
If $M^{n}$ is a closed Riemannian manifold, $(M^{n},g(t))$ is the solution of the Ricci flow $\eqref{RF}$ and $u(t)$ is a positive solution of heat equation $\eqref{heat equation}$ with $u(\cdot,0)\leq A$, then we have the following estimate:
$$t|\nabla_{g(t)} u(t)|_{g(t)}^{2}\leq u^{2}(t)\log\left(\frac{A}{u(t)}\right).$$
\end{lemma}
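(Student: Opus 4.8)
The plan is to prove this Hamilton-type gradient estimate by the maximum principle applied to an auxiliary space-time function, exploiting the fact that under the Ricci flow the Bochner Ricci term cancels automatically (this is precisely the content of Lemma 3.3, so no curvature hypothesis is needed for the estimate itself). First I would record that the bound propagates: since $u$ solves $\partial_{t}u=\triangle u$ on the closed manifold $M$, the maximum principle gives $\max_{M}u(\cdot,t)\le \max_{M}u(\cdot,0)\le A$ for all $t$, so that
$$w:=\log\frac{A}{u}=\log A-\log u\ge 0$$
is well-defined, smooth and nonnegative (recall $u>0$). Since $\nabla w=-\nabla u/u$, one has $|\nabla w|^{2}=|\nabla u|^{2}/u^{2}$, and the claimed inequality $t|\nabla u|^{2}\le u^{2}\log(A/u)$ is equivalent to
$$F:=t|\nabla w|^{2}-w\le 0.$$

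Next I would compute the relevant evolutions. A direct calculation from $\partial_{t}u=\triangle u$ gives $(\partial_{t}-\triangle)w=-|\nabla w|^{2}$. Feeding this into Lemma 3.3 applied to $w$ in place of $u$ (legitimate, since that lemma holds for any smooth family), the Ricci contributions cancel and one obtains
$$(\partial_{t}-\triangle)|\nabla w|^{2}=-2|\nabla^{2}w|^{2}-4\,\nabla^{2}w(\nabla w,\nabla w).$$
Combining with $(\partial_{t}-\triangle)(-w)=|\nabla w|^{2}$ yields
$$(\partial_{t}-\triangle)F=2|\nabla w|^{2}-2t|\nabla^{2}w|^{2}-4t\,\nabla^{2}w(\nabla w,\nabla w).$$

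The core step is the maximum-principle argument. I would set $\bar F(t)=\max_{x\in M}F(x,t)$, which is attained because $M$ is closed. At a spatial maximum, $\nabla F=0$ reads $2t\,\nabla^{2}w(\nabla w,\cdot)=\nabla w$; contracting with $\nabla w$ gives $\nabla^{2}w(\nabla w,\nabla w)=|\nabla w|^{2}/(2t)$, so the cross term becomes $-4t\,\nabla^{2}w(\nabla w,\nabla w)=-2|\nabla w|^{2}$ and hence $(\partial_{t}-\triangle)F=-2t|\nabla^{2}w|^{2}\le 0$ there. Since also $\triangle F\le 0$ at a spatial maximum, this forces $\tfrac{d}{dt}\bar F(t)\le 0$ in the barrier sense. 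As $\bar F(0)=\max_{M}(-w(\cdot,0))\le 0$ because $w\ge 0$, we conclude $\bar F(t)\le 0$, i.e.\ $F\le 0$ everywhere, which is the assertion.

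The part demanding the most care is the maximum-principle bookkeeping: justifying the differentiation of $\bar F$ (via Hamilton's trick for upper barriers, or by working with $F-\varepsilon/t$ to push the maximum into the interior and then controlling the boundary behavior as $t\downarrow 0$, where $t|\nabla w|^{2}\to 0$ and $F\le 0$ automatically). The genuinely structural input — the cancellation of the $2\,\text{\rm Ric}(\nabla w,\nabla w)$ terms along the flow — is already delivered by Lemma 3.3, so everything else reduces to a routine application of the strong maximum principle on the closed evolving manifold.
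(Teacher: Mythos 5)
Your proof is correct, but note first that the paper does not actually prove this lemma: its ``proof'' is a citation to Theorem 2.4 of B\u{a}ile\c{s}teanu--Cao--Pulemotov and Theorem 3.1(b) of Zhang, so you are supplying an argument where the authors defer to the literature. The natural comparison is therefore with the paper's own proof of the parallel Ricci-harmonic statement (Lemma 4.5), and there the computation is essentially yours in different variables: the authors take $\psi=t\frac{|\nabla u|^{2}}{u}-u\log\frac{A}{u}$, which is exactly $u\cdot F$ in your notation, and the factor $u$ absorbs your unsigned cubic term $-4t\,\nabla^{2}w(\nabla w,\nabla w)$ into a complete square, giving the pointwise inequality
\begin{equation*}
(\partial_{t}-\triangle)\psi=-\frac{2t}{u}\left|\nabla_{i}\nabla_{j}u-\frac{\nabla_{i}u\nabla_{j}u}{u}\right|^{2}\leq 0
\end{equation*}
everywhere, so the plain scalar maximum principle finishes with no analysis at the maximum point. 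Your $F=t|\nabla w|^{2}-w$ with $w=\log\frac{A}{u}$ instead satisfies $(\partial_{t}-\triangle)F=-2\langle\nabla w,\nabla F\rangle-2t|\nabla^{2}w|^{2}$, which is why you must invoke $\nabla F=0$ at the spatial maximum (equivalently, the maximum principle with a drift term); your first-order computation there is correct, your use of Lemma 3.3 applied to $w$ is legitimate, and the cancellation of the Ricci terms under the flow is indeed the structural point in both versions. One simplification: your worry about the behavior as $t\downarrow 0$ and the $\varepsilon/t$ regularization is unnecessary, since $F$ is smooth up to $t=0$ with $F(\cdot,0)=-w(\cdot,0)\leq 0$ (using $u(\cdot,0)\leq A$, which, as you note, propagates by the maximum principle), so Hamilton's trick on $[0,t_{1}]$ suffices as stated. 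In sum: your route makes the drift structure explicit at the cost of maximum-point bookkeeping, while the $\psi=uF$ substitution used in the paper (and in the cited references) buys a genuinely linear supersolution inequality requiring no first-order condition.
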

\begin{proof}
This Lemma has been proved in Theorem 2.4 of \cite{heat equation under RF by Cao xiaodong} and Theorem 3.1(b) of \cite{gradient estimate under RF}.
\end{proof}

\begin{lemma}
If $M^{n}$ is a closed Riemannian manifold, $(M,g(t))$ is the solution of the Ricci flow $\eqref{RF}$ with bounded Ricci curvature, $0\leq\text{\rm Ric}(g(t))\leq Kg(t)$, $K$ is a positive constant, and $u(t)$ is a positive solution of heat equation $\eqref{heat equation}$, then we have the following estimate:
$$\frac{|\nabla_{g(t)} u(t)|_{g(t)}^{2}}{u(t)}-\partial_{t}u(t)\leq \frac{n}{2t}u(t)+Knu(t)$$
\end{lemma}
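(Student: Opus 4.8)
The plan is to prove this as a Li--Yau gradient estimate via the parabolic maximum principle, adapted to the time-dependent metric. First I would set $f=\log u$, which is legitimate since $u>0$; the heat equation $\partial_t u=\triangle u$ then becomes $\partial_t f=\triangle f+|\nabla f|^{2}$. Dividing the desired inequality by $u$ and substituting this identity, the assertion $\frac{|\nabla u|^{2}}{u}-\partial_t u\le\bigl(\frac{n}{2t}+Kn\bigr)u$ is equivalent to the pointwise bound $w:=|\nabla f|^{2}-\partial_t f=-\triangle f\le\frac{n}{2t}+Kn$. I would therefore study the auxiliary function $F:=t\,w$ and aim to show $F\le\frac n2+Knt$.

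Next I would compute $(\partial_t-\triangle)w$ along the flow. For $|\nabla f|^{2}$ I would use $\partial_t g^{ij}=2R^{ij}$ together with the Bochner formula; the two Ricci terms so produced cancel. For $\partial_t f$ I would use the commutator identity $\partial_t(\triangle f)=\triangle(\partial_t f)+2R^{ij}\nabla_i\nabla_j f$, which holds on the Ricci flow because $g^{ij}\partial_t\Gamma^{k}_{ij}=0$ by the contracted second Bianchi identity. Combining these, I expect
\[
(\partial_t-\triangle)w=2\langle\nabla f,\nabla w\rangle-2|\nabla^{2}f|^{2}-2R^{ij}\nabla_i\nabla_j f-2\,\text{\rm Ric}(\nabla f,\nabla f).
\]
Since $\text{\rm Ric}\ge 0$, the last term is $\le 0$ and may be discarded.

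I would then apply the maximum principle on $M\times[0,t_1]$. As $M$ is closed and (for smooth data) $w$ stays bounded while $F=tw\to 0$ as $t\to 0^{+}$, the maximum of $F$ is attained; if it is nonpositive the conclusion is immediate, so assume it is positive, attained at some $(x_0,t_0)$ with $t_0>0$ and $w(x_0,t_0)>0$. There $\nabla w=0$, $\triangle w\le 0$, and $\partial_t F=w+t\,\partial_t w\ge 0$. Feeding the evolution equation into $\partial_t w=(\partial_t-\triangle)w+\triangle w$ and using the Cauchy--Schwarz inequality $|\nabla^{2}f|^{2}\ge\frac1n(\triangle f)^{2}=\frac1n w^{2}$, the goal is a differential inequality of the shape $\partial_t w\le-\frac2n w^{2}+2Kw$ at the maximum. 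Coupled with $\partial_t w\ge-\frac wt$ this yields $\frac2n w-2K-\frac1t\le 0$, i.e. $w\le\frac n2\bigl(2K+\frac1t\bigr)=\frac n{2t}+Kn$, which is exactly the claim.

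The main obstacle is the indefinite mixed term $-2R^{ij}\nabla_i\nabla_j f$, the only place where the upper bound $\text{\rm Ric}\le Kg$ enters. Splitting the Hessian into its trace and trace-free parts, $\nabla_i\nabla_j f=\frac{\triangle f}{n}g_{ij}+(\nabla^{2}f)^{\circ}_{ij}$, the trace part gives $-\frac{2\triangle f}{n}R=\frac{2w}{n}R\le 2Kw$, using $R=\text{\rm tr}\,\text{\rm Ric}\le nK$ and $w>0$; this is the source of the clean linear-in-$K$ constant. The genuinely delicate step is to control the trace-free contraction $-2(\text{\rm Ric}^{\circ})^{ij}(\nabla^{2}f)^{\circ}_{ij}$ against the good negative term $-2|(\nabla^{2}f)^{\circ}|^{2}$: a naive Young inequality or completion of squares loses a term of order $K^{2}$, so obtaining the sharp constant $\frac{n}{2t}+Kn$ requires the more careful use of the two-sided bound $0\le\text{\rm Ric}\le Kg$ carried out in \cite{heat equation under RF by Cao xiaodong}, on which this lemma rests.
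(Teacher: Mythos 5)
Your reduction is sound up to the decisive step, and then it stops: the substitution $f=\log u$, the identity $w:=|\nabla f|^{2}-\partial_{t}f=-\triangle f$, the evolution equation for $(\partial_{t}-\triangle)w$ (both the cancellation of the Ricci terms for $|\nabla f|^{2}$ and the commutator identity via $g^{ij}\partial_{t}\Gamma^{k}_{ij}=0$ are correct under Ricci flow), discarding $-2\,\mathrm{Ric}(\nabla f,\nabla f)\le 0$, and the maximum-principle mechanics for $F=tw$ are all fine. But the lemma \emph{is} precisely the assertion that the mixed term $-2R^{ij}\nabla_{i}\nabla_{j}f$ can be absorbed with the clean constant $Kn$, and on this point you only explain why your trace/trace-free splitting fails (the $O(K^{2})$ loss from Young's inequality) and then defer to \cite{heat equation under RF by Cao xiaodong}. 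That makes the proposal a correct reduction plus a citation, not a proof: the one step where the two-sided hypothesis $0\le\mathrm{Ric}\le Kg$ must act is missing. (For what it is worth, the paper's own ``alternative'' proof takes a different, log-free route — Hamilton's argument run directly on $u$, applying the maximum principle to $\psi=\partial_{t}u-\frac{|\nabla u|^{2}}{u}+\frac{n}{2t}u+Knu$ with $\psi\to+\infty$ as $t\to 0$ — but it meets the identical mixed term $2R_{ij}\nabla_{i}\nabla_{j}u$ in its inequality (4.7) and passes over its control when asserting $\partial_{t}\psi\ge\triangle\psi$ on $\{\psi\le 0\}$, ultimately resting on Theorem 2.9 of the same reference; so you have located the genuine crux correctly, but locating it is not resolving it.)

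The gap can in fact be closed inside your own framework by decomposing the Hessian by \emph{sign} rather than by trace. At the maximum point write $\nabla^{2}f=P-N$ with $P,N\ge 0$ the positive and negative parts, and set $a=\mathrm{tr}\,P$, $b=\mathrm{tr}\,N$, so that $w=-\triangle f=b-a\le b$. Then $\mathrm{Ric}\ge 0$ gives $R^{ij}P_{ij}\ge 0$, while $\mathrm{Ric}\le Kg$ gives $R^{ij}N_{ij}\le Kb$, whence $-2R^{ij}\nabla_{i}\nabla_{j}f\le 2Kb$; moreover $|\nabla^{2}f|^{2}=|P|^{2}+|N|^{2}\ge\frac{1}{n}\left(a^{2}+b^{2}\right)$. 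At a positive maximum of $F=tw$ this yields $0\le(\partial_{t}-\triangle)F\le w+t\left(-\tfrac{2}{n}(a^{2}+b^{2})+2Kb\right)\le b(1+2Kt)-\tfrac{2t}{n}b^{2}$, hence $b\le\frac{n}{2t}+nK$ and therefore $w\le b\le\frac{n}{2t}+nK$ — the sharp constant, with no $K^{2}t$ error. Note also that your proposed closing inequality $\partial_{t}w\le-\frac{2}{n}w^{2}+2Kw$ need not hold as stated: comparing $-\frac{2}{n}(a^{2}+b^{2})+2Kb$ with $-\frac{2}{n}(b-a)^{2}+2K(b-a)$ leaves the indefinite quantity $2a\left(K-\frac{2b}{n}\right)$, so one should argue through $b$ rather than through $w$; evaluating the maximum over $M\times[0,t_{1}]$ at the endpoint $t=t_{1}$ (using $K\ge 0$) then gives the stated bound for every time.
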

\begin{proof}
This lemma is from Theorem 2.9 in \cite{heat equation under RF by Cao xiaodong}, here we give another method from \cite{heat equation by Hamilton} to prove it.
Recalling Lemma 4.1, we have
\begin{equation}
\frac{\partial}{\partial t}\frac{|\nabla u|^{2}}{u}=\triangle\frac{|\nabla u|^{2}}{u}-\frac{2}{u}\left|\nabla_{i}\nabla_{j}u-\frac{\nabla_{i}u\nabla_{j}u}{u}\right|^{2},
\end{equation}
taking trace over the second term, yields
\begin{equation}
\left|\nabla_{i}\nabla_{j}u-\frac{\nabla_{i}u\nabla_{j}u}{u}\right|^{2}\geq\frac{1}{n}\left(\triangle u-\frac{|\nabla u|^{2}}{u}\right)^{2}=\frac{1}{n}\left(\partial_{ t}u-\frac{|\nabla u|^{2}}{u}\right)^{2}.
\end{equation}

Next, we need calculate the following
\begin{align}
\frac{\partial}{\partial t}\left(\partial_{ t} u\right)
=2R_{ij}\nabla_{i}\nabla_{j}u+\triangle\left(\partial_{ t}u\right)
\end{align}

If we let
$$\psi=\partial_{ t}u-\frac{|\nabla u|^{2}}{u}+\frac{n}{2t}u+Knu$$
together with (4.4)-(4.6), then
\begin{align}
\frac{\partial\psi}{\partial t}
\geq\triangle\psi+\frac{2}{nu}\left(\partial_{ t}u-\frac{|\nabla u|^{2}}{u}\right)^{2}+2R_{ij}\nabla_{i}\nabla_{j}u-\frac{n}{2t^{2}}u
\end{align}
Now, when $\psi\leq 0$, we have
$$0\leq Knu+\frac{n}{2t}u\leq \frac{|\nabla u|^{2}}{u}-\partial_{ t}u.$$
which implies
$$\frac{\partial\psi}{\partial t}\geq\triangle\psi\ \ \ \ \text{when}\ \ \ \psi\leq0.$$
Note that $\psi\rightarrow +\infty \ \ \text{as}\ \ t\rightarrow 0$, according to the maximum principle, we obtain $\psi\geq 0$ for all time. Then we proved this Lemma.
\end{proof}

\begin{theorem}
If $M^{n}$ is a closed Riemannian manifold, $(M,g(t))_{t\in[0,T)}$ is the solution of the Ricci flow $\eqref{RF}$ with bounded Ricci curvature, $0\leq\text{\rm Ric}(g(t))\leq Kg(t)$, where $K$ is a positive constant, and $u(t)$ is a positive solution of heat equation $\eqref{heat equation}$ with $u(\cdot,0)\leq A$, then the following holds.

$(i)$. If $h(t)$ is a negative time-dependent function, the parabolic frequency $U(t)$ is monotone increasing along the Ricci flow.

$(ii)$. If $h(t)$ is a positive time-dependent function, the parabolic frequency $U(t)$ is monotone decreasing along the Ricci flow.
\end{theorem}
\begin{proof}
Before discussing the monotonicity of $U(t)$, we need calculate the derivative of $I(t)$ and $D(t)$. Using Young's identity and Lemma 4.1, we get the derivative of $I(t)$.
\begin{align}
I'(t)&=\frac{d}{dt}\left(\int_{M}u^{2} dV\right)\\
&=2\int_{M}\left(u\cdot\partial_{t}u-|\nabla u|^{2}\right)d V -2\int_{M}u\triangle u dV\notag\\
&\geq-\left(\frac{n}{t}+2Kn\right)I(t)-2\int_{M}u\triangle u dV\notag\\
&\geq-\left(\frac{n}{t}+2Kn+\frac{C(t)}{2}n\right)I(t)-\frac{2}{C(t)n}\int_{M}|\triangle u|^{2}dV.\notag
\end{align}

For the derivative of $D(t)$, from Lemma 3.3, we have
\begin{align}
D'(t)&=h'(t)\int_{M}|\nabla u|^{2}d V+h(t)\frac{d}{dt}\left(\int_{M}|\nabla u|^{2}dV\right)\\
&=h'(t)\int_{M}|\nabla u|^{2}d V+h(t)\int_{M}(\partial_{t}-\triangle)|\nabla u|^{2}dV\notag\\
&=h'(t)\int_{M}|\nabla u|^{2}d V-2h(t)\int_{M}|\nabla^{2}u|^{2}dV.\notag
\end{align}
If $h(t)< 0$, then combing (4.8) and (4.9), together with Lemma 4.1, yields
\begin{align}
I^{2}(t)&U'(t)\geq \exp\left\{-\int_{t_{0}}^{t}\left(\frac{h'(s)}{h(s)}+2Kn+\frac{C(s)}{2}n+\frac{n}{s}\right) ds\right\}\notag\\
&\quad\cdot\left[-2h I(t)\left(\int_{M}|\nabla^{2}u|^{2}dV\right)+\frac{2h}{C(t)n}\left(\int_{M}|\triangle u|^{2}dV\right)\left(\int_{M}|\nabla u|^{2}dV\right)\right]\notag\\
&\geq \exp\left\{-\int_{t_{0}}^{t}\left(\frac{h'(s)}{h(s)}+2Kn+\frac{C(s)}{2}n+\frac{n}{s}\right) ds\right\}\notag\\
&\quad\cdot\left[-\frac{2h}{n}I(t)\left(\int_{M}|\triangle u|^{2}dV\right)+\frac{2h}{C(t)n}\left(\int_{M}|\triangle u|^{2}dV\right)\left(\int_{M}|\nabla u|^{2}dV\right)\right]\notag\\
&\geq \exp\left\{-\int_{t_{0}}^{t}\left(\frac{h'(s)}{h(s)}+2Kn+\frac{C(s)}{2}n+\frac{n}{s}\right) ds\right\}\notag\\
&\quad\cdot\left[-\frac{2h}{n}I(t)\left(\int_{M}|\triangle u|^{2}dV\right)+\frac{2h}{C(t)n}\cdot\frac{M}{t}I(t)\left(\int_{M}|\triangle u|^{2}dV\right)\right]\notag\\
&=0\notag
\end{align}
where we take trace over $|\nabla^{2}u|^{2}$ and let $C(t)=\frac{N}{t}$.

On the other hand, if $h(t)>0$, similarly, we have
\begin{align}
I^{2}(t)&U'(t)\leq \exp\left\{-\int_{t_{0}}^{t}\left(\frac{h'(s)}{h(s)}+2Kn+\frac{C(s)}{2}n+\frac{n}{s}\right) ds\right\}\notag\\
&\quad\cdot\left[-2hI(t)\left(\int_{M}|\nabla^{2}u|^{2}dV\right)+\frac{2h}{C(t)n}\left(\int_{M}|\triangle u|^{2}dV\right)\left(\int_{M}|\nabla u|^{2}dV\right)\right]\notag\\
&\leq \exp\left\{-\int_{t_{0}}^{t}\left(\frac{h'(s)}{h(s)}+2Kn+\frac{C(s)}{2}n+\frac{n}{s}\right) ds\right\}\notag\\
&\quad\cdot\left[-\frac{2h}{n}I(t)\left(\int_{M}|\triangle u|^{2}dV\right)+\frac{2h}{C(t)n}\cdot\frac{M}{t}I(t)\left(\int_{M}|\triangle u|^{2}dV\right)\right]\notag\\
&=0.\notag
\end{align}

Thus we get our result.
\end{proof}

We define the first nonzero eigenvalue of the Ricci flow $(M^{n},g(t))$ with the weighted measure $dV_{g(t)}$ by
$$\lambda(t)=\inf\left\{\left.\frac{\int_{M}|\nabla_{g(t)}u|_{g(t)}^{2}d V_{g(t)}}{\int_{M}u^{2}d V_{g(t)}}\right| 0<u\in C^{\infty}(M)\setminus \{0\}\right\}$$

Then we have the following corollary by Theorem 4.3.

\begin{corollary}
If $M^{n}$ is a closed Riemannian manifold, $(M,g(t))_{t\in[0,T)}$ is the solution of the Ricci flow $\eqref{RF}$ with bounded Ricci curvature, $0\leq\text{\rm Ric}(g(t))\leq Kg(t)$, where $K$ is a positive constant, and $u(t)$ is a positive solution of heat equation $\eqref{heat equation}$ with $u(0)\leq A$, then for any $t\in[t_{0},t_{1}]\subset(0,T)$, the following holds.

$(i)$. If $h(t)$ is a negative time-dependent function,, then $h(t)\lambda(t)$ is a monotone increasing function.

$(ii)$. If $h(t)$ is a positive time-dependent function,, then $h(t)\lambda(t)$ is a monotone decreasing function.
\end{corollary}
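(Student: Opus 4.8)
The plan is to deduce Corollary 4.4 from the frequency monotonicity of Theorem 4.3, exactly in the spirit in which Corollary 3.8 is obtained from Theorem 3.7. First I would record the variational input: on the closed weighted manifold $(M^{n},g(t),dV_{g(t)})$ the drift Laplacian $\triangle_{f}$ is self-adjoint with respect to $dV_{g(t)}$ by Lemma 3.1, so the first nonzero eigenvalue $\lambda(t)$ is attained by a smooth eigenfunction $\varphi_{t}$, for which $\int_{M}|\nabla\varphi_{t}|^{2}\,dV=\lambda(t)\int_{M}\varphi_{t}^{2}\,dV$, while every admissible $u$ satisfies $\int_{M}|\nabla u|^{2}\,dV\big/\int_{M}u^{2}\,dV\ge\lambda(t)$. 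Thus for the particular function equal to $\varphi_{t}$ one has $D(t)/I(t)=h(t)\lambda(t)$ exactly, whereas a general heat solution only gives a one-sided bound on $D(t)/I(t)$ whose direction is governed by the sign of $h(t)$.

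To connect this with $U(t)$, I would fix a base point $t_{0}\in(0,T)$ and let $u(t)$ solve the heat equation $\eqref{heat equation}$ with initial value $u(t_{0})=\varphi_{t_{0}}$, so that $D(t_{0})/I(t_{0})=h(t_{0})\lambda(t_{0})$ and the exponential prefactor of $U$ equals $1$ at $t_{0}$. For $t>t_{0}$ the variational characterization yields $D(t)/I(t)\le h(t)\lambda(t)$ when $h(t)<0$ and $\ge h(t)\lambda(t)$ when $h(t)>0$. Feeding these into the monotonicity of $U(t)$ supplied by Theorem 4.3 (increasing in case (i), decreasing in case (ii)) then compares $h(t_{0})\lambda(t_{0})$ with $h(t)\lambda(t)$ in the claimed direction, and since $t_{0}$ is arbitrary this propagates to monotonicity on the whole interval $[t_{0},t_{1}]$.

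The hard part will be the exponential normalization factor together with the fact that the heat-evolved $u(t)$ equals $\varphi_{t}$ only at the single instant $t_{0}$. At $t_{0}$ the factor $\exp\{-\int_{t_{0}}^{t}(\frac{h'}{h}+2Kn+\frac{C}{2}n+\frac{n}{s})ds\}$ is $1$, but for $t>t_{0}$ it differs from $1$, so the naive comparison controls the exponentially weighted quantity rather than the bare product $h(t)\lambda(t)$; moreover only a one-sided (equality-at-$t_{0}$) comparison between the Rayleigh quotient of $u(t)$ and $\lambda(t)$ is available. I would therefore argue infinitesimally, differentiating $U$ at $t_{0}$, using $U'(t_{0})$ of the sign dictated by Theorem 4.3, the equality $D(t_{0})/I(t_{0})=h(t_{0})\lambda(t_{0})$, and the one-sided derivative inequality for the Rayleigh quotient (which follows since $\lambda(t)$ is the simple first eigenvalue of $-\triangle_{f}$ on a closed manifold and is differentiable by analytic perturbation theory), and then carefully track how the extra nonnegative terms $2Kn+\frac{C}{2}n+\frac{n}{s}$ enter. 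Reconciling this bookkeeping so that the conclusion is stated for $h(t)\lambda(t)$ itself, rather than for its exponentially normalized version, is the step I expect to require the most care.
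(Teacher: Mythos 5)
Your overall route is the intended one: the paper contains no written proof of Corollary 4.4 at all --- it is stated as an immediate consequence of Theorem 4.3, exactly as Corollary 3.8 is drawn from Theorem 3.7 --- and the mechanism of your first two paragraphs (run the heat equation from a first eigenfunction at the base time, use $D(t_0)/I(t_0)=h(t_0)\lambda(t_0)$ and the variational inequality $D(t)/I(t)\le h(t)\lambda(t)$ for $h<0$ afterwards, then invoke monotonicity of $U$) is precisely what is implicit. One simplification: your worry about having only a one-sided comparison at a single instant dissolves if you argue in integrated form with a \emph{moving} base point --- for each pair $t'\le t''$ start the flow afresh from $\varphi_{t'}$ and use $U(t'')\ge U(t')$ --- so no differentiability of $\lambda(t)$ is needed. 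That is just as well, because your appeal to simplicity of the first nonzero eigenvalue is false in general (on the round sphere it has multiplicity $n+1$); $\lambda(t)$ is typically only Lipschitz.

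However, two steps of your plan genuinely fail. First, admissibility: Theorem 4.3 requires $u$ to be a \emph{positive} solution with $0<\eta\le u(\cdot,0)\le A$; positivity enters Lemmas 4.1 and 4.2 and even the definition of $U$ through $N=\log\frac{A}{\eta}$, $C(t)=\frac{N}{t}$. But a first nonzero eigenfunction of $\triangle_{f}$ is $dV$-orthogonal to constants and therefore changes sign, so the heat flow started at $\varphi_{t_0}$ is not admissible in Theorem 4.3 (this is presumably why the paper's definition of $\lambda(t)$ before Corollary 4.4 restricts to positive $u$ --- a restriction under which, as written, the infimum is $0$ via near-constant functions --- and replacing $u$ by $u+c$ restores positivity but destroys the identity $D/I=h\lambda$; unlike in Section 3, the deduction does not go through verbatim). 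Second, the exponential prefactor cannot be ``reconciled away'' by any bookkeeping: your infinitesimal argument at $t_0$ yields exactly $(h\lambda)'(t_0)\ge\bigl(\frac{h'}{h}+2Kn+\frac{C}{2}n+\frac{n}{t_0}\bigr)h(t_0)\lambda(t_0)$, whose right-hand side is in general negative, i.e. it proves monotonicity of $\exp\bigl\{-\int_{t_0}^{t}\bigl(\frac{h'}{h}+2Kn+\frac{C(s)}{2}n+\frac{n}{s}\bigr)ds\bigr\}\,h(t)\lambda(t)$ and nothing stronger. Indeed the bare claim is false as stated: on the static flat torus (a Ricci flow with $0\le\mathrm{Ric}\le Kg$ and constant conjugate heat kernel, so $\lambda(t)\equiv\lambda_1>0$), take $h(t)=-e^{\mu t}$ with $\mu>0$; then $h<0$ but $h\lambda=-\lambda_1e^{\mu t}$ is strictly decreasing, contradicting (i). Since $\exp\bigl\{-\int_{t_0}^{t}\frac{h'}{h}ds\bigr\}h(t)=h(t_0)$, what your argument actually establishes (once the positivity issue is repaired) is that $h(t_0)\exp\bigl\{-\int_{t_0}^{t}\bigl(2Kn+\frac{C(s)}{2}n+\frac{n}{s}\bigr)ds\bigr\}\lambda(t)$ is monotone; the corollary --- like Corollary 3.8, which has the same defect --- must be read with this normalization, and your writeup should assert the weighted conclusion rather than chase the bare product $h(t)\lambda(t)$.
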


${}$

\subsection{Parabolic frequency for heat equation under closed Ricci-harmonic flow}
For Ricci-harmonic flow, we can prove that the conclusions of Lemma 4.1 and Lemma 4.2 still hold.
\begin{lemma}
Suppose $M^{n}$ is a closed Riemannian manifold, $(M^{n},g(t),\phi(t))$ is the solution of the Ricci-harmonic flow $\eqref{RHF}$, and u(t) is a positive solution of heat equation $\eqref{heat equation}$ with $u(\cdot,0)\leq A$. Moreover, if we assume $\alpha(t)\geq\bar{\alpha}>0$ and $d\phi\otimes d\phi\geq0$, then we have the following estimate:
$$t|\nabla_{g(t)} u(t)|_{g(t)}^{2}\leq u^{2}(t)\log\left({\frac{A}{u(t)}}\right)$$
\end{lemma}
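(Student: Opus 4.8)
The plan is to run Hamilton's maximum-principle argument exactly as in the proofs of Lemma 4.1 and Lemma 4.2, the only new feature being an extra term coming from the map $\phi$ that will turn out to have a favorable sign. First I would record the evolution of $\frac{|\nabla u|^{2}}{u}$ under the Ricci-harmonic flow. Since $u$ solves the heat equation $\eqref{heat equation}$, Lemma 3.6 gives
$$(\partial_t-\triangle)|\nabla u|^{2}=-2|\nabla^{2}u|^{2}-2\alpha\langle\nabla\phi,\nabla u\rangle^{2},$$
and combining this with the same pointwise algebra used to derive (4.4) (together with $(\partial_t-\triangle)u=0$) yields
$$(\partial_t-\triangle)\frac{|\nabla u|^{2}}{u}=-\frac{2}{u}\left|\nabla_i\nabla_j u-\frac{\nabla_i u\nabla_j u}{u}\right|^{2}-\frac{2\alpha}{u}\langle\nabla\phi,\nabla u\rangle^{2}.$$
Notice that, compared with the Ricci-flow identity (4.4), the Ricci contributions cancel exactly as before, and the single new term is $-\frac{2\alpha}{u}\langle\nabla\phi,\nabla u\rangle^{2}$, which is nonpositive because $\alpha\geq\bar{\alpha}>0$ and $u>0$.

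Next I would introduce the Hamilton-type quantity
$$P:=t\,\frac{|\nabla u|^{2}}{u}-u\log\left(\frac{A}{u}\right),$$
whose nonpositivity is precisely the asserted estimate. A short computation using $(\partial_t-\triangle)u=0$ gives $(\partial_t-\triangle)\left(u\log\frac{A}{u}\right)=\frac{|\nabla u|^{2}}{u}$; substituting the displayed evolution of $\frac{|\nabla u|^{2}}{u}$ then produces
$$(\partial_t-\triangle)P=-\frac{2t}{u}\left|\nabla_i\nabla_j u-\frac{\nabla_i u\nabla_j u}{u}\right|^{2}-\frac{2t\alpha}{u}\langle\nabla\phi,\nabla u\rangle^{2}\leq 0$$
for $t\geq 0$, since the $\frac{|\nabla u|^{2}}{u}$ created by differentiating the factor $t$ cancels against $(\partial_t-\triangle)(u\log\frac{A}{u})$.

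Finally I would invoke the parabolic maximum principle on the closed manifold with the time-dependent metric $g(t)$. At $t=0$ one has $P(\cdot,0)=-u(\cdot,0)\log\frac{A}{u(\cdot,0)}\leq 0$, because $0<u(\cdot,0)\leq A$. Since $(\partial_t-\triangle)P\leq 0$, the spatial maximum $\max_M P(\cdot,t)$ is nonincreasing in $t$ (at an interior spatial maximum $\nabla P=0$ and $\triangle_{g(t)}P\leq 0$, whence $\partial_t P\leq 0$ there), so $P\leq 0$ for all $t$, which is exactly $t|\nabla u|^{2}\leq u^{2}\log\frac{A}{u}$.

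The main obstacle is essentially bookkeeping: one must verify that the Ricci contributions cancel in the evolution of $\frac{|\nabla u|^{2}}{u}$ so that the only deviation from Hamilton's Ricci-flow computation is the harmonic-map term, and then confirm that this term has the correct (nonpositive) sign. The latter is guaranteed by $\alpha\geq\bar{\alpha}>0$, while the hypothesis $d\phi\otimes d\phi\geq 0$ holds automatically; hence no curvature or smallness assumption is needed for this gradient estimate, exactly as in the Ricci-flow case of Lemma 4.1.
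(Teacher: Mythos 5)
Your proposal is correct and follows essentially the same route as the paper: the paper likewise derives the evolution equation $(\partial_t-\triangle)\frac{|\nabla u|^2}{u}=-\frac{2}{u}\bigl|\nabla_i\nabla_j u-\frac{\nabla_i u\nabla_j u}{u}\bigr|^2-\frac{2\alpha}{u}\langle\nabla\phi,\nabla u\rangle^2$, pairs it with $(\partial_t-\triangle)\bigl(u\log\frac{A}{u}\bigr)=\frac{|\nabla u|^2}{u}$, forms the same Hamilton-type quantity $\psi=t\frac{|\nabla u|^2}{u}-u\log\frac{A}{u}$, and concludes by the maximum principle on the closed manifold. Your added observations --- that the harmonic-map term has a favorable sign so no curvature hypothesis is needed, and that $d\phi\otimes d\phi\geq 0$ holds automatically --- are accurate refinements of the same argument.
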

\begin{proof}
The proof of this lemma is similar with Theorem 3.1(b) of \cite{gradient estimate under RF}.
According to the maximum principle and the heat equation $\eqref{heat equation}$, combing $u(\cdot,0)\leq A$, we have
$$\sup_{M\times [0,T]}u(x,t)\leq A.$$

Using the Bochner formula $\eqref{Bochner}$, we computer
\begin{equation}
\frac{\partial}{\partial t}\frac{|\nabla u|^{2}}{u}=\triangle\frac{|\nabla u|^{2}}{u}-\frac{2}{u}\left|\nabla_{i}\nabla_{j}u-\frac{\nabla_{i}u\nabla_{j}u}{u}\right|^{2}-\frac{2}{u}\alpha(t)\langle\nabla\phi,\nabla u\rangle^{2}
\end{equation}
and
\begin{equation}
\frac{\partial}{\partial t}\left(u\log\frac{A}{u}\right)=\triangle\left(u\log\frac{A}{u}\right)+\frac{|\nabla u|^{2}}{u}.
\end{equation}

Then we put
$$\psi=t\frac{|\nabla u|^{2}}{u}-u\log\frac{A}{u}.$$
Obviously, $\psi\leq 0$ at $t=0$, together with (4.10) and (4.11), throwing away $\alpha(t)\langle\nabla\phi,\nabla u\rangle^{2}$ with $d\phi\otimes d\phi\geq0$ and $\alpha(t)\geq\bar{\alpha}>0$, yields
\begin{align}
\frac{\partial}{\partial t}\psi
\leq\triangle\psi.\notag
\end{align}
Recalling the maximum principle, we have $\phi\leq 0$ for all time, which means
$$t|\nabla u|^{2}\leq u^{2}\log(\frac{A}{u}).$$
Thus we obtain this lemma.
\end{proof}

\begin{lemma}
Suppose $M^{n}$ is a compact Riemannian manifold, $(M,g(t),\phi(t))$ is the solution of the Ricci-harmonic flow $\eqref{RHF}$ with bounded Ricci curvature, $0\leq\text{\rm Ric}(g(t))\leq Kg(t)$, $K$ is a positive constant, and $u(t)$ is a positive solution of heat equation $\eqref{heat equation}$. Moreover, if we assume $\alpha(t)$ is a non-increasing function, bounded from below by $\bar{\alpha}$, and $0\leq d\phi\otimes d\phi\leq\frac{C}{t}g(t)$, where C is a constant depending on n and $\bar{\alpha}$, then we have the following estimate:
$$\frac{|\nabla_{g(t)} u(t)|_{g(t)}^{2}}{u(t)}-\partial_{t} u(t)\leq \frac{C_{n}}{2t}u(t)+Knu(t)$$
where $C_{n}=\frac{n}{2}+4nC\alpha(0)$.
\end{lemma}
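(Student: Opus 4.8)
The plan is to follow Hamilton's maximum-principle argument used for Lemma 4.2, now tracking the extra terms created by the $\alpha(t)\,d\phi\otimes d\phi$ part of the metric evolution in $\eqref{RHF}$. I would set
$$\psi=\partial_{t}u-\frac{|\nabla u|^{2}}{u}+\frac{C_{n}}{2t}u+Knu ,$$
so that the asserted estimate is exactly $\psi\geq 0$; since $\psi\to+\infty$ as $t\to 0^{+}$, it suffices to prove $\partial_{t}\psi\geq\triangle\psi$ on the set $\{\psi\leq 0\}$ and invoke the maximum principle, just as in Lemma 4.2.

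Two evolution identities feed the computation. For $\tfrac{|\nabla u|^{2}}{u}$ I would reuse the Ricci-harmonic Bochner computation already recorded in (4.10),
$$\partial_{t}\frac{|\nabla u|^{2}}{u}=\triangle\frac{|\nabla u|^{2}}{u}-\frac{2}{u}\Big|\nabla_{i}\nabla_{j}u-\frac{\nabla_{i}u\,\nabla_{j}u}{u}\Big|^{2}-\frac{2}{u}\alpha(t)\langle\nabla\phi,\nabla u\rangle^{2},$$
whose only new feature is the last term. For $\partial_{t}u$ I would differentiate $\partial_{t}u=\triangle u$ in time using $\partial_{t}g^{ij}=2R^{ij}-2\alpha\nabla^{i}\phi\nabla^{j}\phi$; this reproduces the Ricci-flow term $2R_{ij}\nabla_{i}\nabla_{j}u$ of (4.6) and, from the $\alpha\,d\phi\otimes d\phi$ contribution, the new principal term $-2\alpha\nabla^{i}\phi\nabla^{j}\phi\,\nabla_{i}\nabla_{j}u$ together with a lower-order term proportional to $\alpha(\triangle\phi)\langle\nabla\phi,\nabla u\rangle$ coming from $g^{ij}\partial_{t}\Gamma_{ij}^{k}$.

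Substituting $\nabla_{i}\nabla_{j}u=\big(\nabla_{i}\nabla_{j}u-\tfrac{\nabla_{i}u\nabla_{j}u}{u}\big)+\tfrac{\nabla_{i}u\nabla_{j}u}{u}$ into the new principal term produces $-\tfrac{2}{u}\alpha\langle\nabla\phi,\nabla u\rangle^{2}$, which cancels exactly the extra Bochner term above; this is the key simplification. Assembling $(\partial_{t}-\triangle)\psi$, the heat-operator and Laplacian pieces collapse as in the derivation of (4.7), the Ricci term $2R_{ij}\nabla_{i}\nabla_{j}u$ is treated exactly as in Lemma 4.2 (using $0\leq\text{\rm Ric}$), and the trace inequality (4.5) converts $\tfrac{2}{u}|\nabla_{i}\nabla_{j}u-\tfrac{\nabla_iu\nabla_ju}{u}|^{2}$ into $\tfrac{2}{nu}(\partial_{t}u-\tfrac{|\nabla u|^{2}}{u})^{2}$. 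On $\{\psi\leq 0\}$ one has $\tfrac{|\nabla u|^{2}}{u}-\partial_{t}u\geq\tfrac{C_{n}}{2t}u+Knu$, so this square dominates the bad term $-\tfrac{C_{n}}{2t^{2}}u$ produced by $\partial_{t}(\tfrac{C_{n}}{2t}u)$, exactly as $\tfrac{n}{2t^{2}}u$ was absorbed before.

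The hard part is the quantitative control of the surviving $\phi$-terms, namely $-2\alpha\nabla^{i}\phi\nabla^{j}\phi\big(\nabla_{i}\nabla_{j}u-\tfrac{\nabla_{i}u\nabla_{j}u}{u}\big)$ and the $\alpha(\triangle\phi)\langle\nabla\phi,\nabla u\rangle$ term, which must be shown harmless on $\{\psi\leq 0\}$. Here I would use the hypotheses decisively: $0\leq d\phi\otimes d\phi\leq\tfrac{C}{t}g$ gives $|\nabla\phi|^{2}\leq\tfrac{nC}{t}$ and $\langle\nabla\phi,\nabla u\rangle^{2}\leq\tfrac{C}{t}|\nabla u|^{2}$, while $\alpha$ non-increasing gives $\bar\alpha\leq\alpha(t)\leq\alpha(0)$; combining these with a Young inequality against $\tfrac{2}{u}|\nabla_{i}\nabla_{j}u-\tfrac{\nabla_iu\nabla_ju}{u}|^{2}$ and the gradient bound of Lemma 4.5 is precisely what forces the shift of the coefficient from $\tfrac{n}{2}$ to $C_{n}=\tfrac{n}{2}+4nC\alpha(0)$. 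Once these terms are absorbed, $\partial_{t}\psi\geq\triangle\psi$ holds wherever $\psi\leq 0$, and the maximum principle closes the argument.
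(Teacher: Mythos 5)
The paper itself does not prove this lemma --- its ``proof'' is a one-line citation to B\u{a}ile\c{s}teanu \cite{heat equation under RHF} --- so your attempt must stand on its own, and most of its skeleton is sound. The choice of $\psi$, the reuse of (4.10), and your computation of $\partial_t(\triangle u)$ are all correct: under \eqref{RHF} the term $g^{ij}\partial_t\Gamma_{ij}^{k}$, which vanished by the contracted Bianchi identity in the Ricci-flow case (4.6), survives as $2\alpha(\triangle\phi)\nabla^{k}\phi$, and your identified cancellation is real --- writing $A_{ij}=\nabla_i\nabla_j u-\frac{\nabla_iu\nabla_ju}{u}$, the $-\frac{2\alpha}{u}\langle\nabla\phi,\nabla u\rangle^{2}$ produced by splitting $-2\alpha\nabla^i\phi\nabla^j\phi\nabla_i\nabla_ju$ exactly cancels the extra Bochner term in (4.10), leaving $(\partial_t-\triangle)\bigl(\partial_tu-\frac{|\nabla u|^2}{u}\bigr)=\frac{2}{u}|A|^{2}+2R_{ij}\nabla_i\nabla_ju-2\alpha\nabla^i\phi\nabla^j\phi A_{ij}-2\alpha(\triangle\phi)\langle\nabla\phi,\nabla u\rangle$.

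The genuine gap is the last term. Your hypotheses control only \emph{first} derivatives of $\phi$ (from $0\leq d\phi\otimes d\phi\leq\frac{C}{t}g$ one gets $|\nabla\phi|^{2}\leq C/t$), and the only dissipative quantity available in your $\psi$-computation is $\frac{2}{u}|A|^{2}$, which is quadratic in the Hessian of $u$. Young's inequality can indeed absorb $-2\alpha\nabla^i\phi\nabla^j\phi A_{ij}$ against it, at a cost of order $\alpha(0)^{2}C^{2}u/t^{2}$; but no combination of $|\nabla\phi|^{2}\leq C/t$, bounds on $\alpha$, Lemma 4.5, and Young against $\frac{2}{u}|A|^{2}$ can control $\triangle\phi$, a second derivative of $\phi$ that the hypotheses simply do not bound. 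The standard repair --- and the place where the so-far-unused hypotheses actually enter --- is to augment $\psi$ by a correction term such as $c\,\alpha(t)\,u\,|\nabla\phi|^{2}$: since $(\partial_t-\triangle)|\nabla\phi|^{2}=-2|\nabla^{2}\phi|^{2}-2\alpha|\nabla\phi|^{4}$ along \eqref{RHF} and $\alpha'\leq 0$, its evolution supplies a dissipative $-2c\,\alpha u|\nabla^{2}\phi|^{2}$ (effective because $\alpha\geq\bar{\alpha}>0$), against which $2\alpha|\triangle\phi|\,|\langle\nabla\phi,\nabla u\rangle|\leq\beta u|\nabla^{2}\phi|^{2}+\frac{n\alpha^{2}}{\beta u}|\nabla\phi|^{2}|\nabla u|^{2}$ and the cross term $4\alpha|\nabla^{2}\phi(\nabla\phi,\nabla u)|$ can be traded, the leftovers being of size $\frac{C}{t}\cdot\frac{|\nabla u|^{2}}{u}$ and hence controllable on $\{\psi\leq 0\}$. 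That bookkeeping is precisely what produces the shifted constant $C_n=\frac{n}{2}+4nC\alpha(0)$; in your sketch the monotonicity of $\alpha$ is used only through $\alpha\leq\alpha(0)$, which is a sign the mechanism is missing. Relatedly, your claim that $-\frac{C_n}{2t^{2}}u$ is absorbed ``exactly as before'' is not automatic: the trace term gives $\frac{2}{nu}\bigl(\frac{C_n}{2t}u\bigr)^{2}=\frac{C_n^{2}}{2nt^{2}}u$, which dominates $\frac{C_n}{2t^{2}}u$ only when $C_n\geq n$, i.e. $4C\alpha(0)\geq\frac{1}{2}$ --- not guaranteed --- so the quantitative closing of the maximum-principle argument also remains to be carried out rather than asserted.
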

\begin{proof}
For more details, see \cite{heat equation under RHF}.
\end{proof}


\begin{theorem}
Suppose $M^{n}$ is a closed Riemannian manifold, $(M,g(t),\phi(t))_{t\in[0,T)}$ is the solution of the Ricci-harmonic flow $\eqref{RHF}$ with bounded Ricci curvature, $0\leq\text{\rm Ric}(g(t))\leq Kg(t)$, $K$ is a positive constant, and $u(t)$ is a positive solution of heat equation $\eqref{heat equation}$ with $u(\cdot,0)\leq A$. Moreover, if we assume that $\alpha(t)$ is a non-increasing function, bounded from below by $\bar{\alpha}$, and $0\leq d\phi(t)\otimes d\phi(t)\leq\frac{C}{t}g(t)$, where C is a constant depending on n and $\bar{\alpha}$, then the following holds.

$(i)$. If $h(t)$ is a negative time-dependent function, then the parabolic frequency $U(t)$ is monotone increasing along the Ricci-harmonic flow.

$(ii)$. If $h(t)$ is a positive time-dependent function, then the parabolic frequency $U(t)$ is monotone decreasing along the Ricci-harmonic flow.
\end{theorem}
\begin{proof}
The estimate of $I'(t)$ is same as the proof of Theorem 4.3. That is
$$I'(t)\geq-\left(\frac{C_{n}}{t}+2Kn+\frac{C(t)}{2}n\right)I(t)-\frac{2}{C(t)n}\int_{M}|\triangle u|^{2}dV.$$
If we write 
$$\hat{I}(t)=\exp\left\{\int_{t_{0}}^{t}\left(\frac{C_{n}}{s}+2Kn+\frac{C(s)}{2}n\right)ds\right\}I(t)$$
then we can easily find 
$$\hat{I}'(t)\geq-\frac{2}{C(t)n}\exp\left\{\int_{t_{0}}^{t}\left(\frac{C_{n}}{s}+2Kn+\frac{C(s)}{2}n\right)ds\right\}\int_{M}|\triangle u|^{2}dV.$$
For the computation of $D'(t)$, according to Lemma 3.6, we have
\begin{align}
    D'(t)&=h'(t)\int_{M}|\nabla u|^{2}dV+h(t)\int_{M}(\partial_{t}-\triangle)|\nabla u|^{2}dV \notag\\
    &=h'(t)\int_{M}|\nabla u|^{2}dV-2h(t)\int_{M}\left(\alpha(t)d\phi\otimes d\phi(\nabla u,\nabla u)+|\nabla^{2}u|^{2}\right)dV \notag\\
    &=\frac{h'(t)}{h(t)}D(t)-2h(t)\int_{M}\left(\alpha(t)d\phi\otimes d\phi(\nabla u,\nabla u)+|\nabla^{2}u|^{2}\right) dV. \notag
\end{align}
Similarly, if we write $$\hat{D}(t)=\exp\left\{-\int_{t_{0}}^{t}\frac{h'(s)}{h(s)}ds\right\}D(t)$$
then we will find
$$\hat{D'}(t)=-2h(t)\exp\left\{-\int_{t_{0}}^{t}\frac{h'(s)}{h(s)}ds\right\}\int_{M}\left(\alpha(t)d\phi\otimes d\phi(\nabla u,\nabla u)+|\nabla^{2}u|^{2}\right)dV.$$
Thus, the parabolic frequency can be defined by $U(t)=\frac{\hat{D}(t)}{\hat{I}(t)}$.
Combining the boundedness of $\alpha$ and $\phi$, if $h(t)$ is a negative time-dependent function, we see that 
\begin{align}
    &\hat{I}^{2}(t)U'(t)=\hat{D}'(t)\hat{I}(t)-\hat{D}(t)\hat{I}'(t) \\
    &\geq \exp\left\{\int_{t_{0}}^{t}\left(-\frac{h'(s)}{h(s)}+\frac{C_{n}}{s}+2Kn+\frac{C(s)}{2}n\right)ds\right\}\notag \\
    &\quad\cdot\left[-2h\left(\int_{M}u^{2}dV\right)\left(\int_{M}\left(\alpha(t)d\phi\otimes d\phi(\nabla u,\nabla u)+|\nabla^{2}u|^{2}\right)dV\right)\right.\notag \\
    &\quad\left.+\frac{2h}{C(t)n}\left(\int_{M}|\triangle u|^{2}dV\right)\left(\int_{M}|\nabla u|^{2}dV\right)\right] \notag\\
    &\geq \exp\left\{\int_{t_{0}}^{t}\left(-\frac{h'(s)}{h(s)}+\frac{C_{n}}{s}+2Kn+\frac{C(s)}{2}n\right)ds\right\}\notag\\
    &\quad\cdot\left[-\frac{2h}{n}\left(\int_{M}u^{2}dV\right)\left(\int_{M}|\triangle u|^{2}dV\right)+\frac{2h}{C(t)n}\cdot\frac{N}{t}\left(\int_{M}|\triangle u|^{2}dV\right)\left(\int_{M} u^{2}dV\right)\right]\notag\\
    &= 0\notag
\end{align}
where we take $C(t)=\frac{N}{t}$.

With the same discussion, if $h(t)$ is a positive time-dependent function, the parabolic frequency $U'(t)\leq 0$. Thus, we get the desired results.
\end{proof}

\bibliographystyle{amsplain}

\begin{thebibliography}{16}
\bibitem{Dirichlet problem by Almgren}
Almgren, Frederick J., Jr.
\textit{Dirichlet's problem for multiple valued functions and the regularity of mass minimizing integral currents. Minimal submanifolds and geodesics.}
pp. 1–6, North-Holland, Amsterdam-New York, 1979. 

\bibitem{heat equation under RF by Cao xiaodong}
Băileşteanu, Mihai; Cao, Xiaodong; Pulemotov, Artem.
\textit{Gradient estimates for the heat equation under the Ricci flow.}
J. Funct. Anal. 258 (2010), no. 10, 3517-3542.

\bibitem{heat equation under RHF}
Băileşteanu, Mihai.
\textit{Gradient estimates for the heat equation under the Ricci-harmonic map flow. }
Adv. Geom. 15 (2015), no. 4, 445–454.

\bibitem{B-E Ricci}
Bakry, D.; Émery, Michel.
\textit{Diffusions hypercontractives.}
Séminaire de probabilités, XIX, 1983/84, 177–206, Lecture Notes in Math., 1123, Springer, Berlin, 1985.

\bibitem{C-Z 2006}
Chen, Bing-Long; Zhu, Xi-Ping.
\textit{Uniqueness of the Ricci flow on complete noncompact manifolds.}
J. Differential Geom. 74 (2006), no. 1, 119–154.

\bibitem{Harmonic functions by Colding}
Colding, Tobias H.; Minicozzi, William P., II. 
\textit{Harmonic functions with polynomial growth.}
J. Differential Geom. 46 (1997), no. 1, 1–77.

\bibitem{frequency on manifold}
Colding, Tobias H.; Minicozzi, William P., II.
\textit{Parabolic frequency on manifolds.}
International Mathematics Research Notices. (2021).


\bibitem{Mon prop by Garofalo}
Garofalo, Nicola; Lin, Fang-Hua.
\textit{Monotonicity properties of variational integrals, Ap weights and unique continuation.}
Indiana Univ. Math. J. 35 (1986), no. 2, 245–268.
 
\bibitem{Unique continuation by Garofalo}
Garofalo, Nicola; Lin, Fang-Hua.
\textit{Unique continuation for elliptic operators: a geometric-variational approach.}
Comm. Pure Appl. Math. 40 (1987), no. 3, 347–366.

\bibitem{Perelman 03}
Grisha Perelman.
\textit{Ricci flow with surgery on three-manifolds.}
arXiv:math/0303109 

\bibitem{Perelman 02}
Grisha Perelman.
\textit{The entropy formula for the Ricci flow and its geometric applications.}
arXiv:math/0211159

\bibitem{heat equation by Hamilton}
Hamilton, Richard S.
\textit{A matrix Harnack estimate for the heat equation.}
Comm. Anal. Geom. 1 (1993), no. 1, 113-126.

\bibitem{RF}
Hamilton, Richard S.
\textit{Three-manifolds with positive Ricci curvature.}
J. Differential Geometry 17 (1982), no. 2, 255–306.

\bibitem{H-H-L 1998}
Han, Qing; Hardt, Robert; Lin, Fanghua.
\textit{Geometric measure of singular sets of elliptic equations.}
Comm. Pure Appl. Math. 51 (1998), no. 11-12, 1425–1443.

\bibitem{H-L 1994}
Han, Qing; Lin, Fang-Hua.
\textit{Nodal sets of solutions of parabolic equations. II.}
Comm. Pure Appl. Math. 47 (1994), no. 9, 1219–1238.

\bibitem{frequency on RF}
Julius, Baldauf; Dain, Kim.
\textit{Parabolic frequency on Ricci flows.}
arXiv:2201.05505

\bibitem{frequency by Lin}
Lin, Fang-Hua.
\textit{Nodal sets of solutions of elliptic and parabolic equations.}
Comm. Pure Appl. Math. 44 (1991), no. 3, 287–308.

\bibitem{RHF by List}
List, Bernhard.
\textit{Evolution of an extended Ricci flow system.}
Comm. Anal. Geom. 16 (2008), no. 5, 1007–1048.

\bibitem{Parabolic frequency by Li-Wang}
Li, Xiaolong; Wang, Kui.
\textit{Parabolic frequency monotonicity on compact manifolds.}
Calc. Var. Partial Differential Equations 58 (2019), no. 6, Paper No. 189, 18 pp.

\bibitem{LA-2018}
Logunov, Alexander.
\textit{Nodal sets of Laplace eigenfunctions: polynomial upper estimates of the Hausdorff measure.} Ann. of Math. (2) 187 (2018), no. 1, 221–239. 

\bibitem{LA-2018 Yau}
Logunov, Alexander. 
\textit{Nodal sets of Laplace eigenfunctions: proof of Nadirashvili's conjecture and of the lower bound in Yau's conjecture.}
Ann. of Math. (2) 187 (2018), no. 1, 241–262.


\bibitem{RHF by Reto}
Müller, Reto.
\textit{Ricci flow coupled with harmonic map flow.}
Ann. Sci. Éc. Norm. Supér. (4) 45 (2012), no. 1, 101–142.

\bibitem{N-2015}
Ni, Lei.
\textit{Parabolic frequency monotonicity and a theorem of Hardy-Pólya-Szegö.} Analysis, complex geometry, and mathematical physics: in honor of Duong H. Phong, 203–210, Contemp. Math., 644, Amer. Math. Soc., Providence, RI, 2015.

\bibitem{Unique continuation by Poon}
Poon, Chi-Cheung.
\textit{Unique continuation for parabolic equations.}
Comm. Partial Differential Equations 21 (1996), no. 3-4, 521–539.

\bibitem{Shi-89}
Shi, Wan-Xiong.
\textit{Deforming the metric on complete Riemannian manifolds.}
J. Differential Geom. 30 (1989), no. 1, 223–301.

\bibitem{Zelditch}
Zelditch, Steve.
\textit{Local and global analysis of eigenfunctions on Riemannian manifolds.}
In: Handbook of geometric analysis. No. 1, 545–658, Adv. Lect. Math. (ALM), 7, Int. Press, Somerville, MA, 2008. 

\bibitem{gradient estimate under RF}
Zhang, Qi S. 
\textit{Some gradient estimates for the heat equation on domains and for an equation by Perelman.}
Int. Math. Res. Not. 2006, Art. ID 92314, 39 pp.

\end{thebibliography}

\end{document}